\theoremstyle{plain}
\newtheorem{fact}{Fact}[section]
\newtheorem{lemma}[fact]{Lemma}
\newtheorem{theorem}[fact]{Theorem}
\newtheorem{proposition}[fact]{Proposition}
\newtheorem{corollary}[fact]{Corollary}
\theoremstyle{definition}
\newtheorem{example}[fact]{Example}
\newtheorem{remark}[fact]{Remark}
\newtheorem{definition}[fact]{Definition}
\newtheorem*{question*}{Question}
\newcommand{\superimpose}[2]{%
  {\ooalign{$#1\@firstoftwo#2$\cr\hfil$#1\@secondoftwo#2$\hfil\cr}}}
\let\c@equation\c@thm
\newtheorem*{theorem*}{Theorem}
\theoremstyle{definition}
\newcommand{\bb}[1]{\mathbb{#1}}
\DeclareMathOperator{\Aut}{Aut}
\newcommand{\C}{\mathbb{C}}
\newcommand{\extp}{\@ifnextchar^\@extp{\@extp^{\,}}}
\def\@extp^#1{\mathop{\bigwedge\nolimits^{\!#1}}}
\DeclareMathOperator{\csm}{csm}
\DeclareMathOperator{\ssm}{ssm}
\let\emptyset\varnothing
\title{Higher order double point formulas via SSM–Thom polynomials}
\author{Reese Lance}
\date{}
\begin{document}
\maketitle

\begin{abstract}
We study the geometry of double point loci of maps $F:M\to N$ of complex manifolds through the lens of Segre–Schwartz–MacPherson (SSM) classes. Classical double point formulas express the fundamental class of the closure of the double point locus of $F$ in terms of global invariants of source and target spaces, as well as $F$. In this paper we extend these results by computing a one-parameter cohomological deformation of the double point formula given by the SSM class. We compute the SSM class of the double point locus in a large cohomological degree range. The leading term in our new formulas recovers the classical double point formula of Fulton and Laksov, while higher-degree terms provide explicit universal corrections. Our approach uses interpolation techniques for SSM–Thom polynomials of multisingularities, recently developed by Koncki, Nekarda, Ohmoto and Rim\'anyi. We also compute SSM–Thom polynomials for the singularities $A_0$ and $A_1$ in the same range. As an application, we show how the deformed formulas yield refined geometric information about those singularity loci through a theorem of Aluffi and Ohmoto, 
including constraints on when such loci can arise as complete intersections. 
\end{abstract}

\tableofcontents

\section{Introduction}
\subsection{Multiple point formulas (The classical setting)}
The main theorem of this paper extends a family of theorems from classical enumerative geometry called the \textit{double point formulas}, the first non-trivial cases of the more general \textit{multiple point formulas}. Our main theorem can be interpreted as a 1-parameter deformation of the double point formula. These classical formulas appear in many different settings, but all answer the following elementary question: Given a map $F: M\to N$ in some appropriate category (e.g. smooth, algebraic), let $\Delta(F) \subset M$ denote the set of points $m\in M$ for which $F^{-1}(F(m))$ has exactly $r$ points. The multiple point locus $\Delta(F)$ records the geometry of the self-intersection behavior of $F$. When $r=2$, $\Delta(F)$ is known as the \textit{double point set} or \textit{double point locus} of $F$.

Under suitable hypothesis, $\overline{\Delta(F)}$ is an object of the same category, and admits a fundamental class in an appropriate cohomology theory or Chow ring. The fundamental question is whether the topology of the multiple point locus can be expressed in terms of global data.

\begin{question*}
    Can $\left[\overline{\Delta(F)}\right]\in H^*(M)$ be expressed in terms of simple invariants of $M,N$, and $F$? 
\end{question*}

For this question to have a positive answer, it must be assumed that $F$ has some genericity or transversality properties, which also depend on the specific setting. 

This question was investigated in the smooth category when $F$ is a smooth immersion, by a collection of authors who found recursive formulas for the fundamental class in terms of $r$, \cite{LS}, \cite{herbert}, \cite{Ronga1980}. Crucially the work of Ronga introduced the first version of the \textit{double point scheme}, a modification of the double point set which would later be used by Laksov in the algebro-geometric setting. Separately, Kleiman invented methods for the computation of multiple point formulas when points are allowed to be ``infinitely close''. \cite{Kleiman} established the process of iteration, and \cite{Kleiman-hilb} showed a new method using Hilbert schemes.

The case of $r=2$, the double point formulas, was initially studied by Todd and Whitney in the differentiable setting in the 1940's. It was studied in an algebro-geometric context by Fulton and Laksov who found concrete formulas using their technology of residual intersection theory. \cite{laksov} first utilized Ronga's notion of double point scheme to obtain the first algebro-geometric double point formulas in a restricted setting, which was later expanded on in \cite{fulton-laksov} and \cite{FultonIT}. We will recall their description here.

Let $F: M \to N$ be a morphism of nonsingular, complex varieties of dimensions $n,m$, with $M$ complete and $n\geq m$. Let $\Delta(F)$ be the double point set. Under some genericity conditions on $F$, they prove:

\begin{theorem}[Double point formula]
    In the Chow ring $\mathcal A^{n-m}(M)$,
    \[
    \big[\Delta(F)\big] = F^*F_*(1)- c_{n-m}\big(F^*(TN)-TM\big)
    \]
    where $c_{n-m}$ is the $(n-m)$th Chern class of the virtual bundle $F^*(TN)-TM$ on $M$. 
\end{theorem}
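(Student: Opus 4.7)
The plan is to follow the classical residual-intersection approach of Laksov and Fulton. The guiding idea is that the fiber product $W := M \times_N M \subset M \times M$ naturally contains both the diagonal $\Delta_M$ (the ``trivial'' double points $x_1 = x_2$) and the closure of the genuine double points; the \emph{double point scheme} $D(F)$ is then defined as the residual of $\Delta_M$ in $W$, and $\overline{\Delta(F)}$ arises as the image of $D(F)$ under either projection.

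Concretely, I would start from the Cartesian square
\[
\begin{tikzcd}
W \arrow[r] \arrow[d] & M \times M \arrow[d, "F \times F"] \\
\Delta_N \arrow[r, hook] & N \times N
\end{tikzcd}
\]
and analyze the refined Gysin pullback associated to the regular embedding $\Delta_N \hookrightarrow N \times N$. Since the normal bundles are $N_{\Delta_N / N \times N} \cong TN$ and $N_{\Delta_M / M \times M} \cong TM$, the excess at the component $\Delta_M \cong M$ is precisely the virtual bundle $F^*TN - TM$. Fulton's residual intersection formula (\cite{FultonIT}, \S9.2) then yields the cycle identity
\[
(F \times F)^![\Delta_N] = [D(F)] + c_{n-m}(F^*TN - TM) \cap [\Delta_M],
\]
which rearranges as $[D(F)] = (F \times F)^![\Delta_N] - c_{n-m}(F^*TN - TM) \cap [\Delta_M]$.

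It remains to push this identity forward along the first projection $p_1: M \times M \to M$. Base change applied to the Cartesian square $W \rightrightarrows M \to N$, together with the completeness of $M$ (making $F$ proper), identifies $p_{1*}(F \times F)^![\Delta_N]$ with $F^*F_*(1)$; and since $p_1$ restricts to an isomorphism $\Delta_M \xrightarrow{\sim} M$, the excess term pushes to $c_{n-m}(F^*TN - TM) \in \mathcal{A}^{n-m}(M)$. Under the genericity hypotheses on $F$, the map $D(F) \to \overline{\Delta(F)}$ is generically one-to-one, giving $p_{1*}[D(F)] = [\overline{\Delta(F)}]$, and the claimed formula follows. I expect the main obstacle to be the final scheme-theoretic identification: the residual construction produces $D(F)$ only up to refined cycle information, and verifying that its pushforward has multiplicity one on $\overline{\Delta(F)}$ — even in the presence of infinitely close double points — is exactly what the transversality assumptions and the Ronga--Laksov scheme-theoretic refinement of $D(F)$ are designed to ensure.
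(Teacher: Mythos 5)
The paper offers no proof of this statement: it is quoted as the classical theorem of Fulton and Laksov, with the argument deferred to the cited references (\cite{laksov}, \cite{fulton-laksov}, \cite{FultonIT}, \S 9.3). Your sketch reproduces exactly the standard proof from those sources — the fiber square over $\Delta_N \hookrightarrow N\times N$, the residual intersection theorem isolating the excess contribution $c_{n-m}(F^*TN-TM)\cap[\Delta_M]$ of the diagonal component, base change identifying $p_{1*}(F\times F)^![\Delta_N]$ with $F^*F_*(1)$, and pushforward of the residual (double point) scheme — so there is nothing to compare it against within the paper, and the approach is correct. You also correctly locate the real content: the residual intersection formula in the clean form you state requires the residual scheme $D(F)$ to have the expected dimension $2m-n$ (and, for Fulton's version, $F$ to be curvilinear/unramified in the appropriate sense so that the blowup construction applies), and the identification $p_{1*}[D(F)]=[\overline{\Delta(F)}]$ with multiplicity one is precisely what the unstated ``genericity conditions'' guarantee; these are hypotheses to be imported from \cite{FultonIT}, not derived. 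One cosmetic remark: the theorem as printed assigns dimensions $n,m$ to $M,N$ with $n\ge m$, which is inconsistent with the rest of the paper (where $\dim M=m$, $\dim N=n$, $\ell=n-m\ge 0$); your reading, with the double point locus of codimension $n-m=\dim N-\dim M$ in $M$, is the intended one.
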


\begin{remark}
This remarkable formula shows that the topology of the double point locus is surprisingly insensitive to the particular setting. It does not depend on both dimensions $m$ and $n$ independently, only their difference $n-m$. Further, when $M,N$ are fixed, the dependence on $F$ is similarly mild, depending only on the homotopy class of $F$. In this sense, the double point formula is \textit{universal}, so this topic and its extensions fit into the general study of ``universal polynomials expressing the geometry of singularity loci''. 
\end{remark}

\begin{example}[\cite{FultonIT}]
The double point formula recovers a classical enumerative relation between a nodal plane curve $C \subset \bb P^2$ and its normalization, $X$. Let $d$ be the number of node singularities of $X$, $n$ be the degree of $C$, and $g$ be the genus of $X$. Then the double point formula applied to the canonical morphism  $F: X \to \bb P^2$ yields
\[
2d=n^2-3n+2-2g.
\]
\end{example}

Extensions and generalizations of the double point formula are thus likely to lead to further enumerative insights. 

\subsection{Extension by Segre-Schwartz-MacPherson classes}

In this paper we study a one parameter deformation of the cohomological fundamental class known as the Segre-Schwartz-MacPherson (SSM) class of a variety. The SSM class is the unique extensions of the fundamental class satisfying functoriality properties. Formulas for the SSM class will thus determine higher order terms deforming classical formulas for cohomological fundamental classes. 

For a (possibly singular) locally closed variety, $\Sigma$, embedded in a smooth variety $M$, the SSM class is an inhomogeneous class in the cohomology or Chow ring of $M$, whose lowest degree term is the cohomological fundamental class of the closure

\[
\ssm\big(\Sigma\subset M\big) = \left[\overline\Sigma\subset M\right] + h.o.t.
\]
We recall the details of the SSM class in Section \S\ref{subsection:CSM-SSM-intro}. 

In the case $\Sigma=\Delta(F)$, we expect the higher order terms to be universal in the same sense as in the previous section. The main theorem of the present paper provides an explicit computation of the SSM class of $\Delta(F)$ in a large cohomological degree and dimension range, see Theorem (\ref{SSM of A02}), for a certain class of maps called $\mathcal C_\ell$-maps, which have controlled singularity types.

To fix notation, let $m,n\geq 0$ and $\ell:=n-m$. For $F: M \to N$ a map of nonsingular, complex varieties of dimensions $m,n$, denote 
\[
c_i(F):=\frac{c(F^*TN)}{c(TM)}\Bigg|_{\deg i},\quad\quad s_\lambda(F) = F^*F_*\left(\prod_ic_{\lambda_i}\right).
\]
where $c(V)$ denotes the total Chern class of the vector bundle $V$. Then the classical double point formula reads (with the convention that $c_\emptyset=1$)
\[
\left[\overline{\Delta(F)}\right] = s_\emptyset(F)-c_\ell(F)
\]
for \textit{any} $F$ meeting a genericity condition. 

The new Theorem (\ref{SSM of A02}) extends the classical double point formula in a degree range depending on $\ell$. For illustration, we list the first few cases below (suppressing the dependence on $F$). In each case, the leading term recovers the classical formula, while subsequent terms represent higher-order corrections.

\begin{example}\label{ex: SSM A02}
\begin{align*}
\ell=1 : \quad\quad \ssm\big(\Delta(F)\big) &= (s_\emptyset-c_1)+\dots\\\\
\ell=2 : \quad\quad \ssm\big(\Delta(F)\big) &= (s_\emptyset-c_2) + (c_1c_2+2c_3-s_1) + \dots\\\\
\ell=3 : \quad\quad \ssm\big(\Delta(F)\big) &= (s_\emptyset-c_3) + (c_1c_3+3c_4-s_1) + (-c_1^2c_3-4c_1c_4+c_2c_3-6c_5+s_{11}-s_2)+\dots \\\\
\ell=4 : \quad\quad \ssm\big(\Delta(F)\big) &= (s_\emptyset-c_4) + (c_1c_4+4c_5-s_1) + (-c_1^2c_4-5c_1c_5+c_2c_4-10c_6+s_{11}-s_2) \\
&+(c_{1}^{3}c_{{4}}+6c_{1}^{2}c_{5}-2c_{1}c_{2}c_{4}+
15c_{{1}}c_{{6}}-6c_{{2}}c_{{5}}+c_{{3}}c_{{4}}+20c_{{7}}-s_{111} +2s_{21}-s_3) +\dots\\\\
 &\vdots
\end{align*}
\end{example}
The method to prove Theorem (\ref{SSM of A02}) and its corollaries is called \textit{interpolation}, and fits into the study of SSM-Thom polynomials, which we will recall in Section \S\ref{subsection: ssm-polys}. These are universal power series in $c$ and $s$ variables which determine the SSM classes of multisingularities for all $\mathcal C_\ell$ maps, which are our main object of study. In the study of singularities, the double point set is interpreted as a ``multisingularity type'' called $A_0A_0$, or $A_0^2$. It is important to note that there is a theoretical bound in cohomological degree called the Mather bound, approximately $6\ell+7$, at which the interpolation technique no longer applies. So while it is possible to compute even higher order terms than what we achieve here, the interpolation method in its present form cannot prove closed formulas for the full power series. 

The rest of the paper is organized with the SSM-Thom polynomial as the focus, of which ssm$(A_0^2)$ will be our main particular case. Therefore we will discuss the theory of multisingularities. We also compute the SSM-Thom polynomial of two other singularities, named $A_0$ and $A_1$, in the same degree and dimension range. In a sense, these three together are the most frequently occurring singularities. 
 
\subsection{Higher order geometry of double point loci}
As an application, we show how the new formulas obtained for SSM-Thom polynomials can be used to extract fine geometric information about singularity loci. The main tool used is Theorem (\ref{linear sections}) of Aluffi and Ohmoto, which shows that the SSM class of a variety determines the Euler characteristics of all general linear sections of that variety. Further, we show that in a certain setting of maps of projective spaces, using the first non-trivial deformation of the SSM-Thom polynomial of $A_0^2$, the $A_0^2$ locus of a $\mathcal C_\ell$ map cannot be a complete intersection. 

\subsection{Acknowledgements}
We would like to thank Rich\'ard Rim\'anyi for many helpful discussions. 
\section{Setup}
\subsection{Chern-Schwartz-MacPherson/Segre-Schwartz-MacPherson Classes}\label{subsection:CSM-SSM-intro}

We consider possibly singular, possibly not closed varieties $\Sigma$ which are embedded in an ambient compact, complex manifold, $\Sigma \subset M$. The ground field will always be $\C$, and cohomology coefficients will be taken to be rational, unless otherwise stated.

The fundamental cohomology class of a subvariety $[\overline\Sigma\subset M]\in H^{2\cdot\text{codim}(\Sigma)}(M)$ captures some coarse geometric information about the embedded subvariety. This class admits a one-parameter deformation in cohomology, called the CSM class. The CSM class is an inhomogeneous class whose lowest order term is the fundamental class: 
\[
\csm(\Sigma\subset M) = [\overline{\Sigma}\subset M]+h.o.t. \quad \quad \in H^*(M).
\]
We also consider the \textit{Segre-Schwartz-MacPherson} (SSM) class which differs by a multiplicative factor
\[
\ssm(\Sigma\subset M) = \frac{\csm(\Sigma\subset M)}{c(TM)}.
\]
Since the factor is explicit ($M$ is smooth), these different versions carry equivalent information, and it is a matter of convenience or preference which version to work with. We prefer to work with SSM class, which behaves well with respect to pull-back, as opposed to CSM which behaves well with respect to push-forward. 

If there is a group $G$ acting on $M$ which preserves $\Sigma$, there are equivariant versions of CSM and SSM classes in equivariant cohomology, established in \cite{OHMOTO_2006}. In this case the equivariant cohomology rings are infinite dimensional, and the degree of the SSM class is unbounded, so lives in a completion $H^{**}_G(M)$. The existence of the (non-equivariant) CSM and SSM classes was conjectured by Deligne and Grothendieck and constructed independently by MacPherson \cite{macpherson} and Schwartz \cite{schwartz}, and the equivariant version (for our setting) was proven by Ohmoto \cite{ohm24}. In MacPherson's formulation, the CSM classes are constructed as the unique classes satisfying a collection of axioms. 
\begin{enumerate}
\item (Push-Forwards): CSM classes behave predictably under push-forward: If $f:\Sigma \to Y$ is an equivariant, proper morphism, then
\[
f_*(\csm(\Sigma)) =\sum_{n\in\mathbb Z} n\cdot\csm(Y_n),
\]
where
\[
Y_n = \big\{y \in Y\ |\ \chi\big( f^{-1}(y)\big)=n\big\}.
\]

\item  (Normalization): If $\iota: \Sigma\subset M$ is a closed embedding, of smooth manifolds, then 
\[
\csm(\Sigma\subset M)=i_*c(T\Sigma).
\]

\item (Additivity): 
\[
\csm(X\sqcup Y) = \csm(X) + \csm(Y) - \csm( X \cap Y)
\]
if $X,Y$ are locally closed. 
\end{enumerate}

These axioms can be stated in a more general setting, in which CSM classes are defined for all constructible functions on $M$, and our notion of CSM class of a subvariety here corresponds to the indicator function on that subvariety: $\csm(\Sigma)\equiv \csm(\textbf{1}_\Sigma)$. In that setting, the CSM class is defined as the unique natural transformation from the functor of constructible functions to the functor of Borel-Moore homology, satisfying a similar set of axioms. That there exists such a unique natural transformation is the content of the previously cited papers. The definition and set of axioms we write here is a slightly specialized version, which is suitable for our purposes. 

\subsection{Singularities of Maps}

Standard reference for the material of this section is the textbook \cite{mn20}. 

This discussion necessarily involves many infinite dimensional varieties and groups, which are often treated as if they are finite dimensional. To be precise, one must replace these infinite dimensional spaces with ``approximation spaces'', finite dimensional spaces which have enough properties in common with the original. This is a standard technique in equivariant geometry, and is discussed in general settings in \cite{AF}. For this case, it corresponds to replacing all the infinite dimensional spaces of germs with the finite dimensional spaces of $k$-jets of germs, for very large $k$. We will omit this notation and not discuss this angle further, since there is no dependence on $k$, provided it is large enough.

Let $n\geq m$ and set $\varepsilon^\circ(m,n)$ as the vector space of holomorphic germs $\C^m\to \C^n$ sending $0\mapsto 0$. On this space we consider two actions: Let $\mathcal A :=\Aut(\C^m,0)\times \Aut(\C^n,0)$ be the \textit{right-left} group of biholomorphic germs (still sending $0\mapsto 0$), which acts on $\varepsilon^\circ(m,n)$ by conjugation on source and target. Also define $\mathcal K\supset \mathcal A$ to be the group of holomorphic germs of $\Aut(\C^m\times \C^n,0)$ of the form 
\[
\Phi(x,y) = \big(\phi(x),\psi(x,y)\big)
\]
with $\psi(x,0)=0$. The group $\mathcal K$ is called the \textit{contact group}, and its action on $\epsilon^\circ(m,n)$ is induced by its natural action on the graph of a germ. The action is such that two germs are in the same $\mathcal K$-orbit iff their graphs have the same contact with the graph of $y=0$.

A \textit{right-left} singularity (resp. \textit{contact} singularity) is a subvariety of $\varepsilon^\circ(m,n)$ invariant under the action of $\mathcal A$ (resp. $\mathcal K)$. There are many types of right-left singularities, but contact singularities enjoy several properties making them especially suitable for study. There is a convenient characterization of contact singularities in terms of local algebras. 

If $f\in \varepsilon^\circ(m,n)$, the \textit{local algebra} of $f=(f_1,\dots,f_n)$, $Q_f$, is the quotient 
\[
Q_f:=\C[[x_1,\dots,x_m]]/(f_1,\dots,f_n).
\]
This quotient will not always be finite dimensional, but for this paper we will restrict attention to only those germs whose local algebra is finite dimensional. 
A theorem of John Mather says that two germs are $\mathcal K$-equivalent iff their local algebras are isomorphic, \cite{mather}. 

Therefore contact singularities are labeled by isomorphism classes of finite dimensional, commutative, local algebras. We will often refer to contact singularities by solely their local algebras. Though this is is a convenient labeling method, the theory of finite dimensional, commutative, local algebras is still complex, and determining whether two such algebras are isomorphic is not trivial.

\begin{example}
Some families of local algebras. 
\begin{enumerate}
\item $A_i:=\C[x]/(x^{i+1})$
\item $I_{a,b} :=\C[x,y]/(xy,x^a+y^b)$
\item $C_\lambda:=\C[x,y,z]/(x^2-\lambda yz,y^2-\lambda xz,z^2-\lambda xy)$.
\end{enumerate}

\end{example}

Since $\mathcal A\subset \mathcal K$, given a contact singularity $\eta\subset \varepsilon^\circ(m,n)$, and a map of compact, complex manifolds $F: M \to N$ , we can define the $\eta$-points of $F$, $\eta(F)\subset M$, as those points $p \in M$ such that the germ of $F$ at $p$, $[F]_p$, belongs to $\eta$. $\mathcal A$-invariance guarantees this notion will be invariant under change of coordinate charts around $p$ and $F(p)$. The singularities considered thus far are referred to as \textit{monosingularities.}

In contrast, a (contact) \textit{multisingularity} is a concatenation of (contact) monosingularities, $\underline{\eta}:=\eta_1\cdots\eta_r$. The $\underline{\eta}$-points of $F$ may similarly be defined: An $\underline{\eta}$ point of $F$ is a set of $r$ points of $M$, $x_1,\dots,x_r$, with $F(x_1)=\dots =F(x_r)$, and such that $F$ has a singularity of type $\eta_1$ at $x_1$, and that the remaining, unordered points are of unordered singularity types $\eta_2,\dots,\eta_r$. In our convention, a multisingularity has a distinguished first component, $\eta_1$, while the rest of the components are unordered. For this reason, a multisingularity may be thought of as a multiset with a distinguished element. 

\begin{remark}
By definition, monosingularities are subvarieties of the affine space $\varepsilon^\circ(m,n)$, while multisingularities must live in some complicated form of a configuration space, see \cite{ohm24}. This fact is the main reason that the theory of multisingularities is significantly more delicate than that of monosingularities. In particular, the equivariant cohomology of the classifying space of $\varepsilon^\circ(m,n)$ is well understood in terms of generators, while the equivariant cohomology of candidate spaces for multisingularities is less understood, though this is an area of active research.
\end{remark}

\subsubsection{Mather singularities}
Fix $\ell\geq 1$, and denote the \textit{Mather bound}, $M(\ell)$, as
\[
M(\ell) = \begin{cases} 6\ell+8 & \ell\leq 3\\ 6\ell+7& \text{else}\end{cases}.
\]
According to \cite{mather}, for any $d\leq M(\ell)$, the list of local algebras arising from contact monosingularities of germs $(\C^m,0)\to (\C^{m+\ell},0)$, of codimension $\leq d$ is finite and when $m$ is suitably large, depends only on $\ell$. There are 55 local algebras arising from monosingularities for some $\ell$, which are called Mather algebras. The corresponding singularities are called Mather (mono)singularities. 

Similarly, multisingularities with codimension at most $M(\ell)$ are called \textit{Mather multisingularities}, and the list of such is expected to have analogous properties, though there are many more Mather multisingularities. For the degree bounds we will consider, the list of Mather multisingularities is known. 

\subsection{SSM-Thom polynomials}\label{subsection: ssm-polys}
There are many modern review papers on this topic now, for example see \cite{tpprimer, ohmoverview}. The foundations of SSM-Thom polynomials and the corresponding Thom principle in this setting were introduced in \cite{ohmoto2016}.

We are interested in the SSM classes of singularities, considered as subvarieties of $\epsilon^\circ(m,n)$ and extracting geometric information about singularity loci of maps using the ``Thom Principle'', to be described below. In this context, the SSM class of a singularity is referred to as the \textit{SSM-Thom polynomial}\footnote{This is an ``abuse-of-naming'', as the SSM class is actually a power series.}, denoted $\ssm\big(\eta\subset\epsilon^\circ(m,n)\big)$: the lowest non-zero degree of the SSM class is an older, classical object called the Thom polynomial. SSM-Thom polynomials are thus one parameter deformations of Thom polynomials.

\begin{remark}
In this paper, we only consider so-called \textit{source SSM-Thom polynomials}, describing the SSM class of the subset of the domain where $f$ has a prescribed singularity type. In the literature, the \textit{target SSM-Thom polynomials} are also considered, describing the SSM-class of the image of the source singularity locus. Source SSM-Thom polynomials are more delicate: For example, later in this section, we explain that the source SSM-Thom polynomial of a contact multisingularity depends on a two sets of variables $c_i,s_\lambda$, while the target SSM-Thom polynomial depends only on the $s_\lambda$. Additionally, the source singularity itself is a multiset with a distinguished element, while the target multisingularity may be described as simply a multiset. In principle, the target SSM-Thom polynomial can be obtained from the source SSM-Thom polynomial. For the rest of this paper, we study only to the source version. 
\end{remark}

\subsubsection{Stability of maps}
SSM-Thom polynomials encode geometric information about maps which are stable under perturbations. The following definitions of various notions of stability are found in \cite{mn20}, until otherwise mentioned (The authors present many equivalent notions of stability of a map of manifolds, here we recall just one version).

Let $F: M\to N$ be a smooth map of manifolds. $F$ is \textit{stable} if, for any open neighborhood, $U \ni F$ in the Whitney topology of $C^\infty(M,N)$, every $G \in U$ is related to $F$ by diffeomorphisms of the source and target
\[
G=\phi \circ F\circ \psi^{-1}.
\]
Due to Thom transversality (over the reals), the family of stable maps $M\to N$ is dense in $C^\infty(M,N)$ when considering real manifolds, since every map can be perturbed to a stable one. However over $\C$, there is no such notion of perturbation, for example because there are no holomorphic partitions of unity. Instead we consider a variant of the notion of stability, the class $\mathcal C_\ell$, for which we need some intermediate definitions. Let $n\geq m$ and $\ell:=n-m.$

\begin{definition}\label{defn-stable}
    Let $F: (\C^m,S)\to (\C^n,0)$ be a germ sending $S \mapsto 0$. For $d\in \bb N$, a \textit{$d$-unfolding} of $F$ is a germ $\textbf{F}: (\C^m\times \C^d,S\times \{0\})\to (\C^n\times \C^d,0)$ of the form $\textbf{F} = \big(\tilde F(x,u),u\big)$ where $\tilde F(x,0)=F$.
    
    In other words, a $d$-unfolding is a germ with an additional $d$ variables, $u$, such that setting all $u=0$ recovers the original germ. 

    A germ is called \textit{locally stable} if all of its unfoldings are equivalent to an unfolding of the form $G=g\times \text{Id}_{\C^d}$.
\end{definition}

\begin{definition}
    A map of manifolds $F: M\to N$ is called \textit{target-locally stable} if, for every $y \in N$, 1) $F^{-1}(y)$ is finite, and 2) every induced germ
    \[
    [F]_{F^{-1}(y)}: \big(\C^m,F^{-1}(y)\big) \to (\C^n,0)
    \]
    is locally stable, in the sense of Definition \ref{defn-stable}. 
\end{definition}

Finally, the following class is a specialization of the definition of $\tilde{\mathcal C}^{Ma,k}_\ell$ from \cite{rimanyi}, to the setting with which we will eventually be concerned.

\begin{definition}
    For $\ell\geq 1$, and a pair of compact, complex manifolds $M,N$ with $\dim(M)+ \ell=\dim(N)$, let $\mathcal C_\ell$ be the subset of $C^\infty(M,N)$ consisting of target-locally stable maps which are finite morphisms. Maps belonging to $\mathcal C_\ell$ are referred to as $\mathcal C_\ell$-maps, when a choice of $M,N$ is understood. 
\end{definition}

$\mathcal C_\ell$-maps are thus proper, so have well-defined push-forwards, and have only Mather multisingularities. These properties are necessary for the technique we will employ. 

It is interesting to note that, though we study source SSM-Thom polynomials, the correct notion of stability still manifests in the target. 

\subsubsection{Thom Principle}
When $\eta$ is a monosingularity, 
\begin{align*}
\ssm_{\mathcal K}\big(\eta\subset\varepsilon^\circ(m,n)\big)\in H_{\mathcal K}^{**}\big(\epsilon^\circ(m,n)\big)\cong \mathbb{Q}[[a_1,\dots,a_m,b_1,\dots,b_n]]
\end{align*}
where $a_i,b_i$ are equivariant Chern classes associated to $GL_m(\C),GL_n(\C)$. Thus equivariant SSM-Thom polynomials are power series in the variables $a_i,b_i$. When $\eta$ is a monosingularity, not necessarily of contact type, the equivariant SSM class can depend on $a_i,b_i$ independently. For contact monosingularities, it is known that the SSM class depends only on the quotient variables $c_i$, where
$$
c_i:=\frac{1+b_1+b_2+\dots+b_m}{1+a_1+a_2+\dots+a_n}\Bigg|_{\text{degree }i}
$$
and depends only on $m,n$ through the quantity $n-m:=\ell$. That is to say: For a choice of $m,n$, and any $s\geq 0$, the power series in the quotient variables representing $\ssm_{\mathcal K}(\eta\subset\epsilon^\circ(m,n))$, and the power series representing $\ssm_{\mathcal K}(\eta\subset\epsilon^\circ(m+s,n+s))$, are identical, up to consistent re-labeling of indices.

This fact is now known in various contexts as the Thom-Damon-Ronga theorem, and was first proven for \textit{Thom polynomials} of contact monosingularities in \cite{damon, ronga72}, see also \cite{thom}.

This is what it means to ``compute'' an SSM-Thom polynomial, to present it as a power series in $\underline{c}$ with parameter $\ell$. We denote the SSM-Thom polynomial of the singularity $\eta$, for maps of relative dimension $\ell$, by $
\ssm_{\mathcal K}(\eta,\ell)$. This phenomena, the dependence only on the parameter $\ell=n-m$, mirrors the behavior of Mather singularities. 

The motivation to find these power series representations comes from the following, which may be referred to as the Thom-Kazarian principle, or Thom principle depending on the context (we will use the latter for the remainder of this paper): let $\eta$ be a contact monosingularity, and express the equivariant SSM class in a power series, $P$,
\[
\ssm_\mathcal K\big(\eta\subset \epsilon^{\circ}(m,n)\big) \equiv \ssm_{\mathcal K}(\eta,\ell) = P(\underline{c}) \in H^{**}_{\mathcal K}\big(\varepsilon^\circ(m,n)\big).
\]
Then for a stable map of complex manifolds $F: M \to N$, the power series $P$ determines the (non-equivariant) SSM class of the $\eta$ points of $F$,

\begin{equation}\label{univ-mono}
 \ssm\big(\eta(F)\subset M\big) = P(\underline{c})\Big|_{\underline{c}=\underline{c}(F)} \in H^{*}(M)
\end{equation}
where $\underline{c}(F)$ are the quotient Chern classes of $F$,
\[
\underline{c}(F):=\frac{F^*c(TN)}{c(TM)}.
\]
For this reason, SSM-Thom polynomials are referred to as ``universal polynomials'', since they determine the SSM classes of the singularity loci of \textit{any} stable map, with only mild dependence on the choice of map, through the characteristic classes $\underline{c}(F)$. That equality
(\ref{univ-mono}) holds for any stable map $F$ is the essence of the Thom principle. For monosingularities, this principle is essentially a simple deduction from the definition of SSM-Thom polynomial, and the theory of classifying spaces and universal bundles. Crucially, for a specified singluarity $\eta$, and a $\mathcal C_\ell$-map $F: X \to Y$, we consider the $k$th jet bundle $J^k(X)\to X$, for some very large $k$, where a natural notion of $\eta(F)$ lives. As $J^k(X)\to X$ is a $\mathcal A$-bundle, there is a universal bundle, $E\mathcal A \to B\mathcal A$, and a classifying map $\kappa: X\to B\mathcal A$. We perform calculations in $H^{**}(B\mathcal A)$, then pull them back along $\kappa$, which assigns $\underline{c}=c(F)$ (the correct way to obtain the map $\kappa$ from the initial setup actually depends on $F$). 

When $\underline{\eta}$ is a contact \textit{multi}singularity, there exists an analogous power series $P$, now in two sets of variables, $\underline{c}$ and $\underline{s}$, the latter of which being indexed by partitions, and parameter $\ell$, satisfying the property that for any $\mathcal C_\ell$-map, $F: M \to N$, 

\begin{equation}\label{univ relat SSM multi}
 \ssm\big(\underline{\eta}(F)\subset M\big) = P(\underline{c},\underline{s})\Big|_{\underline{c}=\underline{c}(F),\underline{s}=\underline{s}(F)}
\end{equation}
where $\underline{c}(F)$ are same as above and $\underline{s}(F)$ are the (source) \textit{Landweber-Novikov} classes, 
\[
s_\lambda(F):=F^*F_*(c_\lambda(F))
\]
e.g.
\[
s_{1,2,2} \equiv F^*F_*(c_1c_2^2).
\]
To establish the existence and uniqueness of universal formulas for {\em multi}singularity loci, e.g. their SSM classes, is not an obvious extension from the {\em mono}singularity case, see \cite{Kazarian_2003, Kazarian_2006, ohm24, rimanyi}. Yet, for suitable maps and in suitable dimension and cohomological degree range, the existence, uniqueness, and structure theorems are established in \cite{rimanyi}. Our settings falls under this category. Hence for the purposes of this paper, we will treat $\ssm(\underline{\eta},\ell)$ as a mere symbol representing a power series in variables $\underline{c},\underline{s}$ with parameter $\ell$ such that, for any $\mathcal C_\ell$-map $F: M\to N$, the relation (\ref{univ relat SSM multi}) holds.

\section{Interpolation for SSM-Thom Polynomials}

A technique called \textit{interpolation}, initially introduced in \cite{Rim2001}, can be used to compute coefficients of SSM-Thom polynomials. This technique is inspired by an analogous technique in the study of stable envelopes, specifically the axiomatic description introduced by Maulik-Okounkov \cite{MO}. In some restricted settings, the two constructions of CSM class and stable envelope were shown to coincide, see \cite{FR18, RV18}, which served as the inspiration to consider interpolation techniques for CSM/SSM classes in general. Most recently, interpolation for SSM-Thom polynomials has been proven for contact monosingularites in \cite{rimanyi2025interpolationcharacterizationhigherthom}, and for contact multisingularity loci of $\mathcal C_\ell$ maps in \cite{rimanyi}. We will utilize a form of the interpolation theorem, Proposition (8.7) from \cite{rimanyi}, and some notation from \cite{rimanyi2025interpolationcharacterizationhigherthom}. 

The Thom principle states that the SSM-Thom polynomial of the abstract variety representing the singularity type is equal to the SSM class of the singularity locus of any $\mathcal C_\ell$-map, upon evaluation of the abstract variables. The idea of interpolation is that SSM-Thom polynomials are actually \textit{determined}, up to a chosen degree, by finitely many of these relations, thus making their computation, in principle, algorithmic. 

Let $\ell\geq1$ and choose a degree $d\leq M(\ell)$, where $M(\ell)$ is the Mather bound, and let $Q_1,\dots,Q_r$ enumerate the list of Mather singularities with codimension lesser or equal to $d$. For $i=1,2,\dots,r$, denote 

\begin{enumerate}
\item $p_i: (\C^{s_i},0)\to (\C^{s_i+\ell},0)$ as a prototype for $Q_i$ and $\ell$. The theory of genotypes and prototypes is also described in \cite{mather}. Loosely, a prototype for the local algebra of a contact singularity is a germ whose local algebra is $Q_i$, and which is stable under infinitesimal deformations. There is an algorithm for writing down prototypes called \textit{(mini)-versal unfolding}.

\item $\bb T_i$ the maximal torus of the maximal compact symmetry group, $G_i$, of $p_i$, with representations $\rho_i^{\text{source}}$ and $\rho_i^{\text{target}}$ on the source and target of the prototype. Namely, 
\[
G_i=MC\{(g_1,g_2)\in\text{Diff}(\C^{s_i},0)\times\text{Diff}(\C^{s_i+\ell},0)\ |\  g_2\circ p_i\circ g_1^{-1}=p_i\}
\]
and the natural representations induced. 
\item $\psi_i: \mathbb Q[[\underline c,\underline s]]\to H^{**}(B\bb T_i)$, the homomorphism induced by sending the abstract variables $c_j$ to the quotient Chern classes of the prototype:
\[
1+c_1+c_2+\dots\mapsto \frac{c(\rho_i^{\text{target}})}{c(\rho_i^{\text{source}})}
\]
and 
\[
s_\lambda\mapsto \psi_i\left(\prod_ic_{\lambda_i}\right).
\]
\end{enumerate}

For a power series $q$, denote the sum of terms of degree lesser or equal to $d$ by $q|_{\leq d}$, and that of degree exactly $d$ by $q|_{\deg d}$. We will denote SSM-Thom polynomials as ssm$_{\mathcal K}(Q,\ell)$ (potentially omitting the $\mathcal K$-equivariance notation if it is understood). This notation is technically redundant, as the data of a local algebra, in the context of determining a singularity, should be thought of as carrying a choice of $\ell$, but we prefer to include it explicitly. 

Given a Mather multisingularity $\underline\eta$, and polynomial of degree $k$, $B_{\underline\eta}\in \mathbb Q[\underline c, \underline s]$, we may ask if the equality

\begin{equation}\label{star}\tag{$\star$}
    B_{\underline\eta}(F)=\ssm\big(\underline{\eta}(F)\big)\big|_{\leq k}
\end{equation}
holds for the $\underline{\eta}$-locus of a $\mathcal C_\ell$-map, $F$, where $B_{\underline \eta}(F)$ indicates that the abstract variables are evaluated at the characteristic classes of $F$ in the usual way. When $F$ is the prototype of a Mather multisingularity, $F=p_i$, then $B_{\underline\eta}(F)$ can be evaluted as $\psi_i(B_{\underline\eta})$. Sending $B$ through $\psi$ is what \cite{rimanyi} refers to as ``considering $f$ $\bb T$-equivariantly'' or ``in $\bb T$-equivariant cohomology'', and this notation is used in the earlier paper of the same year, \cite{rimanyi2025interpolationcharacterizationhigherthom}.  

The following is an interpolation-type theorem from \cite{rimanyi}, Proposition (8.7): We present a version for source Mather multisingularities, and for our particular degree bound (setting their $t$ variable to be $k$). Most notably, within this degree bound, the class of functions $\mathcal C^{Ma,k}_\ell$ introduced there reduces to $\mathcal C_\ell$, as mentioned previously. 

\begin{proposition}\label{thm-interpolation}
    Let $\underline{\eta}$ be a Mather multisingularity, and $B_{\underline{\eta}}\in \bb Q[\underline c,\underline s]$ be a polynomial of degree $k$. TFAE:
    \begin{enumerate}
        \item The polynomial $B_{\underline \eta}$ satisfies $(\star)$ for any $F \in \mathcal C_\ell$.
        \item The polynomial $B_{\underline \eta}$ satisfies $(\star)$ for the prototypes of all nonempty multisingularities $\underline{\xi}$, such that
codim($\underline{\xi}) \leq k$.
    \end{enumerate}
\end{proposition}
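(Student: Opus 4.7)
The plan is to treat the two implications separately. The direction $(1) \Rightarrow (2)$ is nearly tautological: each prototype $p_i$, after the standard $k$-jet approximation, defines a torus-equivariant $\mathcal{C}_\ell$-map, and the substitution $\psi_i$ is precisely the evaluation $\underline c \mapsto \underline c(p_i)$, $\underline s \mapsto \underline s(p_i)$ prescribed by the Thom principle (\ref{univ relat SSM multi}). If $B_{\underline\eta}$ satisfies $(\star)$ for every $F \in \mathcal{C}_\ell$, then specialising to $F = p_i$ yields $\psi_i(B_{\underline\eta}) = \psi_i(\ssm(\underline\eta, \ell))\big|_{\leq k}$, which is exactly $(\star)$ for $p_i$.

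For the reverse direction, I would introduce the difference
$$D := B_{\underline\eta} - \ssm(\underline\eta, \ell)\big|_{\leq k} \;\in\; \bb{Q}[\underline c, \underline s]_{\leq k}.$$
Hypothesis (2) asserts that $\psi_i(D) = 0$ in $H^{**}(B\bb{T}_i)_{\leq k}$ for every prototype $p_i$ of codimension at most $k$. If I can show $D = 0$ as an abstract polynomial, then for any $\mathcal{C}_\ell$-map $F$, evaluating at $\underline c(F), \underline s(F)$ and invoking (\ref{univ relat SSM multi}) gives $B_{\underline\eta}(F) = \ssm(\underline\eta(F))\big|_{\leq k}$, which is $(\star)$ for $F$. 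The theorem therefore reduces to the following interpolation lemma: \emph{the joint kernel of the evaluation maps $\{\psi_i\}$, taken over all Mather multisingularities of codimension at most $k$, is trivial on $\bb{Q}[\underline c, \underline s]_{\leq k}$.}

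My strategy for this lemma is an induction on codimension, exploiting a triangular structure on the polynomial ring. I would order the Mather multisingularities $\underline\xi$ by increasing codimension and, for each $\underline\xi$, identify a distinguished monomial $m_{\underline\xi}$ (typically a product of $\underline s$-variables whose partition indices encode the multiplicities and component types of $\underline\xi$) such that $\psi_{i_{\underline\xi}}(m_{\underline\xi})$ has a nonzero top-degree component, while $\psi_{i_{\underline\xi}}(m_{\underline\xi'})$ vanishes for $\underline\xi'$ of strictly smaller codimension. This presents the kernel condition as a triangular homogeneous linear system on the coefficients of $D$ with nonzero diagonal, forcing them to vanish successively. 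The main obstacle is constructing such a triangularising family: it requires a careful analysis of the prototypes' characteristic classes, namely the source and target representations $\rho_i^{\text{source}}, \rho_i^{\text{target}}$ for the $\underline c$-variables and, more delicately, the contact-singularity structure of the fiber $p_i^{-1}(0)$ for the Landweber--Novikov classes $\underline s$. The Mather bound $M(\ell)$ enters crucially here, since below it the list of Mather multisingularities of codimension $\leq k$ is finite and equipped with a well-understood partial order; the argument then reduces to the $\mathcal C_\ell$-analogue of the triangularity argument used for monosingularities in \cite{rimanyi2025interpolationcharacterizationhigherthom}.
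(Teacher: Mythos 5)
First, note that the paper does not prove this proposition at all: it is imported verbatim (with the degree bound specialized) from \cite{rimanyi}, Proposition (8.7), so there is no internal proof to compare against. Judged on its own terms, your proposal has a genuine gap in the direction $(2)\Rightarrow(1)$. You reduce everything to the ``interpolation lemma'' that the joint kernel of the evaluation maps $\{\psi_i\}$ is trivial on $\bb Q[\underline c,\underline s]_{\leq k}$, and then only gesture at a triangularising family of monomials whose existence you do not establish. That lemma \emph{is} the theorem: constructing the triangular structure (and, more fundamentally, proving that vanishing on the finite list of prototypes controls the behaviour on \emph{every} $\mathcal C_\ell$-map) is precisely the content of the classifying-space/Kazarian-space arguments in \cite{rimanyi}, and your last sentence in effect defers back to that reference. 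Note also that what the equivalence actually requires is weaker and subtler than triviality of the joint kernel: since the $\underline s$-variables satisfy relations on actual maps (e.g.\ $s_\lambda=c_\lambda\cdot s_\emptyset$ on monogerms, with the two-term analogue on multigerms as in Lemma (\ref{LN classes})), one must show that any $D$ killed by all $\psi_i$ also vanishes after substituting $\underline c(F),\underline s(F)$ for every $F\in\mathcal C_\ell$ --- not that $D=0$ as an abstract polynomial. Proving the stronger statement would suffice, but you have given no argument that it holds, and it is not obviously true.

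There is also a smaller but real issue in $(1)\Rightarrow(2)$: a prototype $p_i$ is a germ $(\C^{s_i},0)\to(\C^{s_i+\ell},0)$, not a target-locally stable finite morphism of \emph{compact} complex manifolds, so condition (1) cannot literally be ``specialised to $F=p_i$''. One needs the $\bb T_i$-equivariant form of the Thom principle, obtained via the Borel construction and finite-dimensional approximation spaces, to convert $(\star)$ for honest $\mathcal C_\ell$-maps into the identity $\psi_i(B_{\underline\eta})=\ssm(\underline\eta(p_i))|_{\leq k}$ in $H^{**}(B\bb T_i)$. This is a standard manoeuvre, but it should be stated rather than elided, since it is exactly the bridge between the abstract polynomial and the prototype evaluations that the rest of your argument relies on.
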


In fact, though it was not stated this way in the original paper, Condition 1 can be relaxed to say that the polynomial $B_{\underline\eta}$ satisfies $(\star$) for any $F$ which, up to codimension $\ell$ has only Mather multisingularities, and beyond may have singularities of any type, while $\mathcal C_\ell$-maps have only Mather multisingularities in all codimensions. 

\section{SSM-Thom polynomials via interpolation}\label{main}

The first local algebras, ordered by codimension, are $A_0,A_0^2$, and $A_1$, and within a certain degree bound, these are the only multisingularities that a $\mathcal C_\ell$-map can possess.

\begin{proposition}\label{local algs}
For any $\ell\geq 1$ and $0\leq N<\ell$, 
\[
\text{ssm}(A_0,\ell)\big|_{\leq \ell+N} + \text{ssm}(A_0^2,\ell)\big|_{\leq \ell+N}+\text{ssm}(A_1,\ell)\big|_{\leq\ell+ N}=1.
\]
\end{proposition}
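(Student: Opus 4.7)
The plan is to exploit the additivity axiom of CSM classes applied to the stratification of $M$ by Mather multisingularity loci of a $\mathcal{C}_\ell$-map, and then transfer the resulting identity to the abstract polynomial level via the Thom principle and interpolation.

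First, for any $F \in \mathcal C_\ell$ with $F : M \to N$, the source decomposes as a finite disjoint union $M = \bigsqcup_{\underline{\eta}} \underline{\eta}(F)$ of locally closed strata indexed by contact Mather multisingularities (finiteness uses that $\mathcal C_\ell$-maps have only Mather multisingularities and that codimension is bounded by $\dim M$). The additivity axiom gives $\csm(M) = \sum_{\underline{\eta}} \csm(\underline{\eta}(F))$, and dividing by $c(TM)$ (valid since $M$ is smooth) produces
\[
1 \;=\; \sum_{\underline{\eta}} \ssm\bigl(\underline{\eta}(F)\bigr) \quad \text{in } H^*(M).
\]

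Next, I would carry out a codimension bookkeeping. For a contact multisingularity $\underline{\eta} = \eta_1 \cdots \eta_r$, the codimension of $\underline{\eta}(F)$ in $M$ is $\sum_i \mathrm{codim}(\eta_i) + (r-1)\ell$, and among Morin monosingularities one has $\mathrm{codim}(A_i) = i(\ell+1)$. The four lowest codimensions on the Mather list are thus $A_0 : 0$, $A_0^2 : \ell$, $A_1 : \ell+1$, and $A_0^3 : 2\ell$, with every remaining Mather multisingularity (including $A_0 A_1$, $A_1 A_0$, $A_2$, etc.) having codimension $\geq 2\ell$. Since $N < \ell$ forces $\ell + N < 2\ell$, truncation at cohomological degree $\leq \ell + N$ kills the SSM contribution of every stratum except $A_0, A_0^2, A_1$, and the identity above collapses to
\[
\ssm(A_0(F))\big|_{\leq \ell+N} + \ssm(A_0^2(F))\big|_{\leq \ell+N} + \ssm(A_1(F))\big|_{\leq \ell+N} = 1
\]
for every $F \in \mathcal C_\ell$.

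To conclude, I would lift this pointwise identity to one of abstract polynomials in $\mathbb Q[\underline c, \underline s]$. By the Thom principle, the polynomial $B := \bigl(\ssm(A_0,\ell) + \ssm(A_0^2,\ell) + \ssm(A_1,\ell)\bigr)\big|_{\leq \ell+N}$ satisfies $B(\underline c(F), \underline s(F)) = 1$ for every $\mathcal C_\ell$-map $F$; truncation commutes with evaluation because the abstract variables are graded with $\deg c_i = i$ and $\deg s_\lambda = |\lambda|$. The constant polynomial $1$ trivially shares this property, so Proposition \ref{thm-interpolation} applied to the difference $B - 1$ at the prototypes of all relevant multisingularities forces $B = 1$, yielding the claimed identity. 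The main obstacle is the codimension bookkeeping step: one must verify that no Mather multisingularity outside $\{A_0, A_0^2, A_1\}$ has codimension $< 2\ell$, which uses the explicit Mather classification recalled in Section 2.2.1 together with the additive codimension formula for multisingularities.
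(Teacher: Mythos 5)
Your proof is correct and follows essentially the same route as the paper: stratify the source of a $\mathcal C_\ell$-map by multisingularity type, apply additivity of SSM classes, and note that every stratum other than $A_0$, $A_0^2$, $A_1$ has codimension at least $2\ell > \ell+N$ and so cannot contribute below the truncation degree. Your write-up is in fact slightly more careful than the paper's, which asserts that $A_0^3(F)$ must be empty rather than merely that its SSM class begins in degree $2\ell$, and you supply the interpolation step needed to pass from the identity evaluated on every $F$ to the identity of abstract truncated power series.
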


\begin{proof}
Let $F:X \to Y$ be a $\mathcal C_\ell$-map of compact, complex manifolds. The local algebras $A_0,A_0^2$ and $A_1$ are of codimensions 0, $\ell$, and $\ell+1$, respectively. The next lowest codimension is that of $A_0^3$, of codimension $2\ell$. Considering our restriction on $N$, $A_0^3(F)$ must be empty and $F$, being target-locally stable, cannot have singularities of any other type. Therefore

\[
A_0(F)\cup A_0^2(F)\cup A_1(F) = X
\]
The claimed equality then holds by the additivity of SSM classes. 
\end{proof}

This proposition says that determining any two of the SSM-Thom polynomials in this degree range determines the third trivially, and that in this range, there are no other singularities. This proposition is the reason the degree bound of $\ell\geq 1$ and $0\leq N < \ell$ is considered consistently throughout this paper. 

Let us begin by investigating the SSM-Thom polynomial associated to $A_0^2$. The corresponding Thom polynomial is the \textit{double point formula}, which expressed in our (universal) language, reads
\[
\ssm\big(A_0^2,\ell=1\big) = s_\emptyset-c_1\ +\ h.o.t.
\]

Another natural avenue to generalize the double point formula from the perspective of SSM-Thom polynomials is the study of \textit{multiple point formulas}, corresponding to the SSM-Thom polynomial of $A_0^n$ with the same stipulations as above, see \cite{rim-multi-pt}.

For the statement of our results, we introduce some notation: for $m \in \mathbb N$, denote $S_{(m)}$ as the matrix determinant,
\[
S_{(m)} :=\begin{vmatrix}c_1 & c_2 & \dots&c_{m}\\ 1 & c_1 & \dots&c_{m-1}\\0&1&\dots\\ & \vdots & \\
&&1&c_1\end{vmatrix},\quad \text{ and }  \quad S_{(m)}^{LN} := S_{(m)}\Big|_{c\leftrightarrow s}
\]
where $c\leftrightarrow s$ indicates the swapping of $c$ variables in $S_{(m)}$ with $s$ variables by the rule defined on $c$ monomials $\prod_ic_{\lambda_i} \leftrightarrow s_\lambda$ and extending by linearity. For example, when $m=3$ (note that the substitution does not commute with taking the matrix determinant)
\[
S_{(m)}= c_1^3-2c_1c_2+c_3, \quad \quad S^{LN}_{(m)} = s_{111}-s_{12}+s_3.
\]
For the calculations involved in the main theorem, we will need a lemma characterizing the Landweber-Novikov classes for the prototypes we will encounter. These formulas and calculations are well known to experts, but we recall them here for completeness. 

\begin{lemma}\label{LN classes}
Let $f: M \to N$ be a prototype germ of a contact monosingularity, and assume $e(TM)\ne 0$. Then we have
\begin{equation}\label{LN-classes-germ}
s_\emptyset = \frac{f^*e(TN)}{e(TM)}
\end{equation}
where $e$ is the equivariant Euler class. In particular,

\begin{equation}\label{LN-classes-germ-corr}
s_\lambda = c_\lambda \cdot s_\emptyset,\quad\quad S_{(N)}^{LN}=S_{(N)}\cdot s_\emptyset.
\end{equation}
If $f$ is a prototype of a contact multisingularity of the form\footnote{This lemma can easily be generalized, but this is the only form we will need for our present applications.}, 

\[
f: M^{(1)}\sqcup M^{(2)}\to N
\]
where both $e(TM^{(1)})$ and $e(TM^{(2)})$ are non-zero, we have

\begin{equation}\label{LN-classes-multigerm}
s_\lambda = c_\lambda^{(1)}\frac{f^*e(TN)}{e(TM^{(1)})} + c_\lambda^{(2)} \frac{f^*e(TN)}{e(TM^{(2)})}
\end{equation}
where $c_i=(c_i^{(1)},c_i^{(2)})$. Further,

\begin{equation}\label{LN-classes-multigerm-corr}
S^{LN}_{(N)} = S_{(N)}\left(c^{(1)}_1,\dots,c_N^{(1)}\right)\frac{f^*e(TN)}{e(TM^{(1)})} + S_{(N)}\left(c^{(2)}_1,\dots,c^{(2)}_N\right)\frac{f^*e(TN)}{e(TM^{(2)})}.
\end{equation}
\end{lemma}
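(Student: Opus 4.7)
The plan is to exploit the fact that, for a prototype germ, both source $M$ and target $N$ are equivariantly contractible complex vector spaces under the maximal torus $\bb T$. Consequently $H^{**}_{\bb T}(M)$ and $H^{**}_{\bb T}(N)$ canonically identify with $R := H^{**}_{\bb T}(\mathrm{pt})$, under which $f^*$ becomes the identity and $e(TM), e(TN) \in R$ are the products of the weights of the respective tangent representations. The hypothesis $e(TM) \neq 0$ then permits dividing in the fraction field of $R$.

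The key computation is that of $f_*(1) \in R$. First I would introduce the zero-section inclusions $i_M:\{0\}\hookrightarrow M$ and $i_N:\{0\}\hookrightarrow N$, which satisfy $i_N = f\circ i_M$ since $f(0)=0$. Equivariant self-intersection yields $(i_M)_*(1) = e(TM)$ and $(i_N)_*(1) = e(TN)$, and functoriality of push-forward gives
\[
f_*\bigl(e(TM)\bigr) \;=\; f_*\,(i_M)_*(1) \;=\; (i_N)_*(1) \;=\; e(TN).
\]
The projection formula implies that $f_*$ is $R$-linear (since $f^*$ is the identity on $R$), so $f_*(x) = x\cdot f_*(1)$ for any $x\in R$. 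Setting $x = e(TM)$ and dividing yields $f_*(1) = e(TN)/e(TM)$; applying $f^*$ produces (\ref{LN-classes-germ}). The relation $s_\lambda = c_\lambda s_\emptyset$ then follows directly from $R$-linearity applied to $c_\lambda \in R$, and the identity $S^{LN}_{(N)} = S_{(N)}\cdot s_\emptyset$ is a consequence of the linearity of the substitution $\prod_i c_{\lambda_i}\leftrightarrow s_\lambda$ extended monomially.

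For the multigerm case I would decompose the push-forward additively along components as $f_* = f^{(1)}_* + f^{(2)}_*$, apply the monosingularity formula branch-by-branch, and note that $f^*$ pulls back classes from $N$ identically to each component of $M^{(1)}\sqcup M^{(2)}$. This gives (\ref{LN-classes-multigerm}), and the same monomial-linearity argument handles (\ref{LN-classes-multigerm-corr}). The main obstacle will be mostly bookkeeping: carefully setting up the Borel-construction and approximation-space framework the excerpt alludes to, verifying that $\bb T_i$ indeed acts making $M$ and $N$ equivariantly contractible with the claimed Euler classes, and tracking which copy of $R$ hosts each class in the multigerm setup. Once these identifications are fixed, the argument reduces to standard equivariant localization and projection-formula manipulations.
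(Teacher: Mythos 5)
Your proposal is correct and follows essentially the same route as the paper: identify source and target cohomology with $H^{**}_{\bb T}(\mathrm{pt})$ (the paper phrases this as ``pullback is an isomorphism for germs''), use the projection formula to get $f_*f^*\alpha=\alpha f_*(1)$, combine with $f_*(e(TM))=e(TN)$, and divide by the nonvanishing Euler class, with the multigerm case handled by additivity over components. The only difference is cosmetic — you justify $f_*(e(TM))=e(TN)$ via the zero-section inclusions and functoriality, where the paper simply asserts it.
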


\begin{proof}
In the first case, suppose $f:M\to N$ is a prototype function of a contact singularity. By definition, $s_\emptyset=f^*f_*(1)$. Let $\alpha\in H^*(N)$ be a class satisfying $f^*\alpha=e(TM)$. This can always be done because for germs, pullback is an isomorphism on cohomology.

Recall the well-known (cohomological) projection formula,

\begin{lemma}
If $f$ is a map of compact, orientable manifolds, then for $\alpha \in H^*(N)$ and $\beta \in H^*(M)$,

\[
f_*(f^*\alpha\smile \beta)=\alpha\smile f_*(\beta)
\]
in which push-forward in cohomology, the ``wrong-way map'', is defined using Poincar\'e duality on both manifolds, and homological push-forward.

\end{lemma}

Setting $\beta=1$ and suppressing cup product notation, we obtain the ``push-pull'' formula,

\[
f_*f^*\alpha=\alpha f_*(1).
\]
For example, when $\dim(M)=\dim(N)$, $f_*(1)$ is the familiar notion of degree. Then for our choice of $\alpha$,
\begin{align*}
f_*f^*\alpha&=\alpha f_*(1)\\
f_*\big(e(TM)\big)&=\alpha f_*(1)\\
f^*f_*\big(e(TM)\big) &=f^*\alpha f^*f_*(1).
\end{align*}
Of course, $f_*(e(TM))=e(TN)$, and applying the projection formula again on the RHS, 
\[
f^*\big(e(TN)\big)=e(TM)f^*f_*(1).
\]
Because $e(TM)\ne 0$, we obtain
\[
s_\emptyset\equiv f^*f_*(1)=\frac{f^*\big(e(TN)\big)}{e(TM)}.
\]
The first formula of (\ref{LN-classes-germ-corr}) follows from another application of the projection formula, and the second follows from the first.

The formulas (\ref{LN-classes-multigerm}) and (\ref{LN-classes-multigerm-corr}) follow by the same calculations as above, where the cohomological unit is a sum of the units for $M^{(1)}$ and $M^{(2)}$. 

\end{proof}

\begin{remark}
    The equivariant Euler class may be computed as a product over the weights of the symmetry group action. The assumption that $e(TM)\ne 0$ is equivalent to saying that no tangent direction has equivariant weight 0. It is a happy coincidence that this criteria is satisfied for all prototypes of multisingularities, which can be verified experimentally by checking all of them.
    
\end{remark}

\subsection{SSM-Thom polynomial of $A_0^2$}

\begin{theorem}\label{SSM of A02}
Let $\ell\geq1$. For all $0\leq N<\ell$, the degree $\ell+N$ component of ssm$(A_0^2,\ell)$ is

\begin{equation} \label{eq1}
\ssm(A_0^2,\ell)\Big|_{\text{deg }\ell+N}=
(-1)^{N+1}\sum_{k=0}^NS_{(N-k)}{\ell+N-1\choose k}c_{\ell+k}+(-1)^{N} S^{LN}_{(N)}.
\end{equation}

\end{theorem}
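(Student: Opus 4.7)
The plan is to apply the interpolation Proposition \ref{thm-interpolation} with $k = \ell + N$. By Proposition \ref{local algs} together with the codimension hierarchy of Mather multisingularities, the only nonempty multisingularities of codimension at most $\ell + N$ in the range $N < \ell$ are $A_0$, $A_0^2$, and $A_1$. I would first assemble a candidate polynomial $B \in \mathbb{Q}[\underline{c}, \underline{s}]$ of degree $\leq \ell+N$, namely the sum of the right-hand sides of (\ref{eq1}) for the degrees $\ell, \ell+1, \ldots, \ell+N$. By Proposition \ref{thm-interpolation}, to conclude it suffices to check that $B(p_{\underline{\xi}})$ agrees with $\ssm(A_0^2(p_{\underline{\xi}}))|_{\leq \ell+N}$ for each $\underline{\xi} \in \{A_0, A_0^2, A_1\}$.

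Each of these three verifications proceeds by explicit substitution governed by Lemma \ref{LN classes}. At the $A_0$ prototype, the standard linear embedding $\mathbb{C}^m \hookrightarrow \mathbb{C}^{m+\ell}$, the normal bundle has rank $\ell$, so $c_{\ell+k} = 0$ for all $k \geq 1$ and $s_\emptyset = c_\ell$; applying (\ref{LN-classes-germ-corr}) yields $S^{LN}_{(N)} = S_{(N)} c_\ell$. Substituting into (\ref{eq1}) annihilates every $k \geq 1$ summand and cancels the $k = 0$ term against $(-1)^N S^{LN}_{(N)}$, so the restriction vanishes, matching the empty double-point locus $A_0^2(p_{A_0})$. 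At the $A_1$ prototype, the monogerm formula (\ref{LN-classes-germ-corr}) again applies, and one substitutes the explicit equivariant weights of the $A_1$ miniversal unfolding to match the expected restriction of $\ssm(\overline{A_0^2})$ at this stratum; note that this restriction is nontrivial since $A_1$ lies in the closure of $A_0^2$.

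At the $A_0^2$ multigerm prototype $f: \mathbb{C}^m \sqcup \mathbb{C}^m \to \mathbb{C}^{m+\ell}$, the multigerm identity (\ref{LN-classes-multigerm-corr}) splits $S^{LN}_{(N)}$ into two branch contributions, and the target value is the sum of SSM classes of the two origins, $\sum_{i=1,2} e(TM^{(i)})/c(TM^{(i)})$. The main obstacle of the proof is verifying that, after branch-wise substitution, the combination in (\ref{eq1}) reassembles into exactly this two-term expression. This forces a combinatorial identity pairing the binomial weights $\binom{\ell+N-1}{k}$ with the determinantal structure of $S_{(N-k)}$, and this is precisely the step where the particular numerical coefficients in (\ref{eq1}) are pinned down. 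Once this identity is in hand, Proposition \ref{thm-interpolation} closes the proof.
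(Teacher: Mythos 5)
Your overall strategy is exactly the paper's: invoke Proposition (\ref{thm-interpolation}) for $k=\ell+N$, note via Proposition (\ref{local algs}) that only the prototypes of $A_0$, $A_0^2$, $A_1$ need checking, and verify $(\star)$ at each using Lemma (\ref{LN classes}). Your $A_0$ check is complete and matches the paper. However, you have mislocated the crux of the argument, and the step you leave unaddressed is the one carrying essentially all the content. At both the $A_0$ and $A_0^2$ prototypes the quotient Chern classes satisfy $c_{\ell+k}=0$ for all $k\geq 1$, so the binomial weights $\binom{\ell+N-1}{k}$ never appear in those checks: the $A_0^2$ verification collapses to the single $k=0$ summand plus the multigerm splitting of $S^{LN}_{(N)}$, and the only identity needed there is the classical determinantal expansion of $h_N$ in elementary symmetric polynomials. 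Contrary to your claim, nothing at the $A_0^2$ prototype pins down the coefficients $\binom{\ell+N-1}{k}$ for $k\geq1$.

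The coefficients are pinned down only at the $A_1$ prototype, which you dispatch in one sentence. There, one must (i) identify the $A_0^2$ locus of the $A_1$ prototype $p_3(x,y)=(x^2,xy_1,\dots,xy_\ell,y)$ as the punctured axis $\C_x\setminus\{0\}$ and compute its CSM class by additivity and normalization (it comes out to $\beta_1\cdots\beta_\ell$), and (ii) prove the nontrivial identity
\begin{equation*}
h_N(\alpha,\beta_1,\dots,\beta_\ell)=\sum_{k=0}^N S_{(N-k)}\binom{\ell+N-1}{k}(-\alpha)^k,
\end{equation*}
where the $S_{(m)}$ are built from $c_i(p_3)$, which mix $e_i(\alpha+\beta_1,\dots,\alpha+\beta_\ell)$ with powers of $\alpha$; this occupies the paper's entire appendix. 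Without this, the proof is incomplete. A smaller bookkeeping caution: at the $A_0^2$ multigerm the condition $(\star)$ lives in $H^*(\C_1^\ell\sqcup\C_2^\ell)$, a product of rings, so the target value is the pair of classes of the two origins checked componentwise (one component suffices by symmetry), not their sum in a single ring; writing it as $\sum_{i}e(TM^{(i)})/c(TM^{(i)})$ invites an error when matching against the branch-wise evaluation of the left-hand side.
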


Example (\ref{ex: SSM A02}) provides a visual display of the degree ranges being considered. 

\begin{proof}
The chosen degree bound satisfies $\ell+N\leq M(\ell)$ for all $\ell\geq 1$. As in Proposition (\ref{local algs}), the local algebras of contact singularities of codimension $\leq \ell+N$ are only $Q_1=A_0,Q_2=A_0^2$, and $Q_3=A_1$, of codimensions 0, $\ell$ and $\ell+1$, respectively. According to the interpolation theorem Proposition (\ref{thm-interpolation}), we then need to verify Condition (\ref{star}) in 3 cases: 

\textbf{Case $Q_1=A_0$:} The prototype is
\[
p_1: \{\text{pt}\}\to \C^{\ell}
\]
with maximal torus $U(1)^\ell$. The source representation is trivial, and the target representation is given by the defining representation
\[
\rho^{\text{target}}_1(\alpha_1,\dots,\alpha_\ell)\cdot (z_1,\dots,z_\ell)=(\alpha_1z_1,\dots,\alpha_\ell z_\ell).
\]
Then the map $\psi_1$ assigns via the splitting principle
\[
c = \prod_{i=1}^{\ell}(1+\alpha_i)\quad \Rightarrow \quad c_i=e_i(\alpha_1,\dots,\alpha_\ell)
\]
where $e_i(X_1,\dots,X_n)$ is the elementary symmetric polynomial in $n$ variables, and
\[
s_\emptyset = e(T\C^\ell)=\alpha_1\cdots \alpha_\ell,\quad \quad s_\lambda = c_\lambda \cdot s_\emptyset.
\]
In particular, $c_k=0$ for any $k>\ell$, and $s_\emptyset=c_\ell$. Then within the summation in (\ref{eq1}), $\ssm(A_0^2,\ell)|_{\text{deg }\ell+N}$, has only one non-zero term corresponding to $k=0$.
Also, 
\[
S_{(N)}^{LN} = S_{(N)}\cdot s_\emptyset.
\]
The LHS of (\ref{star}) is thus 
\[
(-1)^{N+1}\begin{vmatrix}e_1 & e_2 & \dots&e_{N}\\ 1 & e_1 & \dots&e_{N-1}\\0&1&\dots\\ & \vdots & \\
&&1&e_1\end{vmatrix}e_\ell+(-1)^{N}S_{(N)}^{LN}=0
\]
while the RHS is the SSM class of the double point locus of $p_1$. This locus is the empty set, thus as well, the RHS of (\ref{star}) is 0. 

\textbf{Case $Q_2=A_0^2$:} The prototype is 
\[
p_2: \C_1^\ell\sqcup \C_2^\ell\to\C^{2\ell}
\]
\[
z=(z_1,\dots,z_\ell)\mapsto \begin{cases}
(z_1,\dots,z_\ell,\underbrace{0,\dots,0}_\ell) & z\in \C^\ell_1\\
(\underbrace{0,\dots,0}_\ell,z_1,\dots,z_\ell) & z \in C^\ell_2
\end{cases}
\]
with maximal torus $U(1)^{2\ell}$ and a pair of source and target representations, $(\rho_2^{\text{source}},\rho_2^{\text{target}})$ and $((\rho_2^{\text{source}})',\rho_2^{\text{target}})$, where
\[
\rho_2^{\text{target}}(\alpha_1,\dots,\alpha_\ell,\beta_1,\dots,\beta_\ell)\cdot (z_1,\dots,z_{2\ell}) =\big(\alpha_1 z_1,\dots,\beta_{\ell}z_{2\ell}\big)
\]

\[
\rho_2^{\text{source}}(\alpha_1,\dots,\alpha_\ell,\beta_1,\dots,\beta_\ell)\cdot (z_1,\dots,z_\ell) = \begin{cases}
(\alpha_1 z_1,\dots,\alpha_\ell z_\ell) & z\in \C^\ell_1\\
0 & z \in \C^\ell_2
\end{cases}
\]

\[
(\rho_2^{\text{source}})'(\alpha_1,\dots,\alpha_\ell,\beta_1,\dots,\beta_\ell)\cdot (z_1,\dots,z_\ell) = \begin{cases}
0 & z \in \C^\ell_1\\
(\beta_1 z_1,\dots,\beta_\ell z_\ell) & z\in \C^\ell_2
\end{cases}.
\]

Due to the symmetry in the source, it suffices to consider only $(\rho_2^{\text{source}},\rho_2^{\text{target}})$, as Condition (\ref{star}) for the two representations will be identical. Interestingly, as in Lemma (\ref{LN classes}), the second pair of source and target representations still contribute through the Landweber-Novikov classes: This is one facet particular to the case of multigerms.

Next, $\psi_2$ assigns

\[
c^{(1)}=\frac{\prod_{i=1}^{\ell}(1+\alpha_i)(1+\beta_i)}{\prod_{i=1}^{\ell}(1+\alpha_i)}\quad \Rightarrow \quad c_i^{(1)} = e_i(\beta_1,\dots,\beta_\ell)
\]

\[
c^{(2)}=\frac{\prod_{i=1}^{\ell}(1+\alpha_i)(1+\beta_i)}{\prod_{i=1}^{\ell}(1+\beta_i)}\quad \Rightarrow \quad c_i^{(2)} = e_i(\alpha_1,\dots,\alpha_\ell)
\]

and, because $p_2$ is a multigerm,

\[
S^{LN}_{(N)}=S_{(N)}\left(c^{(1)}_1,\dots,c_N^{(1)}\right)\prod_{i=1}^\ell\beta_i + S_{(N)}\left(c^{(2)}_1,\dots,c^{(2)}_N\right)\prod_{i=1}^\ell\alpha_i.
\]
In particular, $c_k=0$ for any $k>\ell$ again. Then the LHS of (\ref{star}) is 
\[
(-1)^{N+1}S_{(N)}\Big(c_1^{(1)},\dots,c_N^{(1)}\Big) \cdot\prod_{i=1}^{\ell}\beta_i+ (-1)^{N}\Big( S_{(N)}(c_1^{(1)},\dots,c_{(N)}^{(1)})\cdot\prod_{i=1}^{\ell}\beta_i+S_{(N)}\left(c^{(2)}_1,\dots,c^{(2)}_N\right)\prod_{i=1}^\ell\alpha_i\Big)
\]
\[
=(-1)^N\cdot S_{(N)}\left(c^{(2)}_1,\dots,c^{(2)}_N\right)\prod_{i=1}^\ell\alpha_i.
\]
As $c_i^{(2)}=e_i(\alpha_1,\dots,\alpha_\ell)$, (for readability, we shorten the notation to $e_i(\alpha)$) we have 
\[
=(-1)^N \begin{vmatrix}e_1(\alpha) & e_2(\alpha) & \dots&e_N(\alpha)\\ 1 & e_1(\alpha) & \dots&e_{N-1}(\alpha)\\0&1&\dots\\ & \vdots & \\
&&1&e_1(\alpha)\end{vmatrix}\prod_{i=1}^\ell \alpha_i.
\]
This matrix determinant is exactly the expansion of $h_N(\alpha_1,\dots,\alpha_\ell)$, the complete, homogeneous, symmetric polynomial in $\alpha_1,\dots,\alpha_\ell$, into elementary symmetric polynomials, which generate the algebra of symmetric functions:
\[
=(-1)^Nh_N(\alpha_1,\dots,\alpha_\ell)\prod_{i=1}^\ell\alpha_i.
\]
Meanwhile the RHS of (\ref{star}) is the SSM class of the double point locus of $p_2$. The locus of double points in $\C^{(1)}\sqcup \C^{(2)}$ is the pair of origins, thus the component in $\C^\ell_1$ consists of only the origin:
\[
\text{RHS} = \ssm\Big(A_0^2(p_2),\ell\Big)\Big|_{\deg \ell+N}=\ssm\big(0\subset \C^\ell_1,\ell\big)\Big|_{\deg \ell+N}=\frac{\csm\big(0\subset\C_1^\ell,\ell\big)}{c(T\C_1^\ell)}\Bigg|_{\deg \ell+N}.
\]
The inclusion $\iota:\{0\}\hookrightarrow \C^\ell$ is smooth, so the normalization axiom implies

\[
\csm\big(0\subset\C_1^\ell,\ell\big) = \iota_*(1)
\]

\[
=e(T\C^\ell_1)=\prod_{i=1}^{\ell}\alpha_i
\]
\[
\Rightarrow \text{RHS} =\frac{\prod_{i=1}^\ell\alpha_i}{\prod_{i=1}^\ell(1+\alpha_i)}\Bigg|_{\text{deg }\ell+N}=\prod_{i=1}^\ell\alpha_i \cdot h_N(-\alpha_1,\dots,-\alpha_\ell) = (-1)^N\prod_{i=1}^\ell\alpha_i\cdot h_N(\alpha_1,\dots,\alpha_\ell)
\]
as required. Finally, we check

\textbf{Case $Q_3 = A_1$:} The prototype is

\[
p_3 : \C^{\ell+1}\to \C^{2\ell+1}
\]
\[
(x,y_1,\dots,y_\ell)\mapsto (x^2,xy_1,\dots,xy_\ell,y_1,\dots,y_\ell)
\]
with maximal torus $U(1)^{\ell+1}$ and representations 

\[
\rho^{\text{source}}(\alpha,\beta_1,\dots,\beta_\ell)\cdot(x,y_1,\dots,y_\ell) = (\alpha x,\beta_1 y_1,\dots,\beta_\ell y_\ell)
\]
\[
\rho^{\text{target}}(\alpha,\beta_1,\dots,\beta_\ell)\cdot(x,y_1,\dots,y_\ell,z_1,\dots,z_\ell) = \big(\alpha^2 x,\alpha\beta_1 y_1,\dots,\alpha\beta_\ell y_\ell,\beta_1 z_1,\dots,\beta_\ell z_\ell\big).
\]
Computation of the LHS will be difficult, so let us begin with the RHS, which features a nice non-trivial example in which calculating the multisingularity locus of a prototype, and its subsequent CSM class, is actually possible, the latter using mostly only the defining axioms of the CSM class. The class of such examples is very small, and in general is a hopeless task.

The $A_0^2$ locus of $p_3$ is such that two distinct points of the source go to one in the target: Let $p_3(x,y_1,\dots,y_\ell) = p_3(x', y'_1,\dots,y'_\ell)$. By definition of $p_3$, 

\[
y_i=y'_i, \quad i=1,\dots,\ell
\]
\[
xy_i=x'y'_i, \quad i=1,\dots,\ell
\]
\[
x^2=(x')^2.
\]
If there is at least one $y_i\ne 0$, then the second equation implies $x=x'$, so the points are no longer distinct. Therefore the locus may only consist of points of the form $(x,0,\dots,0)$, under which $(x,0,\dots,0)$ and $(-x,0,\dots,0)$ are two to one, as long as $x\ne 0$. Thus, the locus of smooth double points is the $x$-axis in $\C^{\ell+1}$, denoted $\C_x$, minus the origin: 

\[
A_0^2(p_3) = \C_x- 0.
\]
By additivity, the CSM class is computed as
\begin{align*}
\csm\big(\C_x-0\big) &= \csm(\C_x)-\csm(0)\\
&=\iota_*c(T\C_x) - \iota_*(1)\\
&=c(T\C_x)\iota_*(1)-\iota_*(1)\\
&=(1+\alpha)\beta_1\cdots\beta_\ell - \alpha\beta_1\cdots\beta_\ell\\
&=\beta_1\cdots\beta_\ell.
\end{align*}
So we obtain
\[
\text{RHS (\ref{star})} = \ssm\big(A_0^2(p_3),\ell\big)\Big|_{\ell+N} = \frac{\beta_1\cdots\beta_\ell}{(1+\alpha)(1+\beta_1)\cdots(1+\beta_\ell)}\Bigg|_{\ell+N}.
\]
Taking the degree $\ell+N$ component,
\[
\text{RHS}=(-1)^N\prod_{i=1}^\ell\beta_i \cdot h_N(\alpha,\beta_1,\dots,\beta_\ell).
\]
For the LHS of (\ref{star}), $\psi_3$ assigns\footnote{The difficulty in analyzing the case $Q=A_1$ comes from exactly this fraction: There is no simple expression for the degree $k$ component of its Taylor expansion.} 

\[
c(p_3) = \frac{(1+2a)(1+a+b_1)\cdots (1+a+b_\ell)(1+b_1)\cdots(1+b_\ell)}{(1+a)(1+b_1)\cdots (1+b_\ell)}.
\]
Expanding Taylor series individually, we can write 
\[
c_k=e_k(\alpha+\beta_1,\dots,\alpha+\beta_\ell) + (-1)^{k+1}\alpha^k+\sum_{a=1}^{k-1 } e_a(\alpha+\beta_1,\dots,\alpha+\beta_\ell)\cdot (-1)^{-(k-a)+1}\alpha^{k-a} 
\]
where $e(-)$ is again the elementary symmetric polynomial. When $k>\ell$, $e_k=0$, so that
\[
c_{k>\ell} = \sum_{a=0}^\ell e_a(\alpha+\beta_1,\dots,\alpha+\beta_\ell)\cdot(-1)^{-(k-a)}\alpha^{k-a}
\]
\[
=\alpha^{k-\ell}(-1)^{k-\ell-1}\left(\sum_{a=0}^\ell e_a\cdot (-1)^{\ell-a}\alpha^{\ell-a}\right).
\]
Applying the elementary symmetric polynomial identity
\[
\prod_{j=1}^n(\lambda-X_j) = \lambda^n-e_1(X_1,\dots,X_n)\lambda^{n-1} + e_2(X_1,\dots,X_n)\lambda^{n-2}+\dots+(-1)^ne_n(X_1,\dots,X_n)
\]
we obtain, for $k>0$, 
\[
c_{\ell+k} = (-1)^{k+1}\alpha^k\beta_1\cdots \beta_\ell.
\]
This differs from the previous two cases, as now all $c_k$ are non-zero. Because $p_3$ is a monogerm, we have 
\[
s_\emptyset = 2\prod_{i=1}^\ell(\alpha+\beta_i) \equiv 2e_\ell(\alpha+\beta_1,\dots,\alpha+\beta_\ell)
\]
\[
S_{(N)}^{LN} = S_{(N)}\cdot s_\emptyset.
\]
Therefore the LHS of (\ref{star}) is 
\[
(-1)^{N+1}\sum_{k=0}^N S_{(N-k)}{\ell+N-1\choose k}c_{\ell+k} + (-1)^NS^{LN}_{(N)}
\]
\[
=(-1)^{N+1}S_{(N)}(c_\ell-s_\emptyset) + (-1)^{N+1}\sum_{k=1}^N S_{(N-K)}{\ell+N-1\choose k}c_{\ell+k}.
\]
Applying the previous elementary symmetric polynomial identity again, we obtain 
\[
c_\ell-s_\emptyset = -\beta_1\cdots \beta_\ell.
\]
Combining this with the formula for $c_{\ell+k}$, LHS can be simplified to 
\[
\text{LHS} = (-1)^{N+1}\left[\sum_{k=0}^NS_{(N-k)}{\ell+N-1\choose k}(-1)^{k+1}\alpha^k\right]\prod_{i=1}^{\ell}\beta_i.
\]
Therefore to complete this step, the final step, it suffices to show 
\[
h_N\big(\alpha,\beta_1,\dots,\beta_\ell\big) = \left[\sum_{k=0}^NS_{(N-k)}{\ell+N-1\choose k}(-\alpha)^k\right].
\]
Let us relegate the proof of this (purely algebraic) statement to the appendix, see Lemma (\ref{last-step}), though its proof does involve some interesting algebraic/combinatorial techniques. With the final prototype checked, the proof is complete. 
\end{proof}

\subsection{SSM-Thom polynomials of $A_0$ and $A_1$}\label{A0, A1 subsection}

In a similar fashion, we can prove

\begin{theorem}\label{ssm A1}
In the same setting as Theorem (\ref{SSM of A02}),
\[
\text{ssm}(A_1,\ell)\Big    |_{\text{deg }\ell+N} = (-1)^{N-1}\sum_{k=0}^{N-1}S_{(N-1-k)}(c_1,\dots,c_{N-1-k})\cdot {\ell+N-2\choose k}\cdot c_{\ell+k+1}
\]
\end{theorem}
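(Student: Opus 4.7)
I would follow the same interpolation strategy that drives the proof of Theorem \ref{SSM of A02}. By Proposition \ref{thm-interpolation}, it suffices to verify that the claimed polynomial in $(\underline c, \underline s)$ satisfies $(\star)$ on the prototypes of every Mather multisingularity of codimension $\leq \ell + N$. By Proposition \ref{local algs}, the relevant list is exactly $A_0, A_0^2, A_1$, so the prototypes $p_1, p_2, p_3$, their maximal tori, representations, and the evaluations $\psi_i$ are all available from the proof of Theorem \ref{SSM of A02}.

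For the $A_0$ and $A_0^2$ prototypes, both sides of $(\star)$ should vanish identically. The $A_1$ locus of $p_1$ (a map from a point) and of $p_2$ (an immersion on each branch) is empty, so the RHS of $(\star)$ is zero. The LHS involves only classes $c_{\ell + k + 1}$ with $k \geq 0$; from the computations in the proof of Theorem \ref{SSM of A02}, the quotient Chern classes of $p_1$ and $p_2$ are elementary symmetric polynomials in $\ell$ variables, hence vanish in degree exceeding $\ell$. Thus the LHS vanishes as well.

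The real content is the case of $p_3 : \C^{\ell+1} \to \C^{2\ell+1}$. First I would check by a rank analysis of the Jacobian of $p_3$ that the origin is the unique point where the map fails to be an immersion; since $p_3$ is the prototype of $A_1$, this gives $A_1(p_3) = \{0\}$. The normalization axiom then yields
\[
\ssm\big(\{0\} \subset \C^{\ell+1}\big) = \frac{\alpha \beta_1 \cdots \beta_\ell}{(1+\alpha)(1+\beta_1)\cdots(1+\beta_\ell)},
\]
whose degree $\ell + N$ component computes to $(-1)^{N-1}\, \alpha\beta_1\cdots\beta_\ell\, h_{N-1}(\alpha, \beta_1, \ldots, \beta_\ell)$. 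For the LHS I would reuse the identity $c_{\ell+j} = (-1)^{j+1} \alpha^j \beta_1 \cdots \beta_\ell$ (for $j > 0$) established for $p_3$ in the proof of Theorem \ref{SSM of A02}, substitute into the proposed formula, and collect common factors, obtaining
\[
(-1)^{N-1}\,\alpha\beta_1\cdots\beta_\ell \sum_{k=0}^{N-1} S_{(N-1-k)}\binom{\ell + N - 2}{k}(-\alpha)^k.
\]

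The entire verification then reduces to the algebraic identity
\[
h_{N-1}(\alpha, \beta_1, \ldots, \beta_\ell) = \sum_{k=0}^{N-1} S_{(N-1-k)}\binom{\ell + N - 2}{k}(-\alpha)^k,
\]
which is precisely Lemma \ref{last-step} with $N$ replaced by $N - 1$. Since that lemma is already established in the appendix, no new combinatorial ingredients are needed. The main obstacle is really just the Jacobian analysis pinning down $A_1(p_3) = \{0\}$ and the recognition that the same algebraic identity reappears at shifted index; neither step introduces new machinery beyond what was developed for Theorem \ref{SSM of A02}.
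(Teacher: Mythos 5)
Your proposal is correct and follows essentially the same route as the paper: verify $(\star)$ on the three prototypes, observe that both sides vanish for $p_1$ and $p_2$ since the relevant Chern classes are elementary symmetric polynomials in $\ell$ variables, and reduce the $p_3$ case to Lemma~\ref{last-step} with $N$ shifted to $N-1$ after identifying $A_1(p_3)=\{0\}$ and applying normalization. The only difference is cosmetic (you propose an explicit Jacobian rank check where the paper simply asserts the locus).
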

\begin{proof}
    We must check the same three (\ref{star}) conditions, for $A_0,A_0^2$ and $A_1$, and conveniently, the LHS of Condition (\ref{star}) has already been spelled out explicitly in the proof of Theorem (\ref{SSM of A02}) for a nearly identical formula, so we explain the checks briefly, using the same notations.
    
    \textbf{Case $A_0$}: For the $A_0$ prototype, $p_1$, Condition (\ref{star}) is satisfied automatically as both sides are 0: The $A_1$ locus of the prototype $p_1$ is empty, so the RHS of (\ref{star}) is 0, and so is the LHS, since $c_i(p_1)=e_i(\alpha_1,\dots,\alpha_\ell)$, so $c_k=0$ for $k>\ell$, and all Chern classes appearing in our formula contain a factor of degree $>\ell$, which is not the case for the formula for ssm($A_0^2,\ell)$, which has an initial term beginning with a factor of $c_\ell$. 

    \textbf{Case $A_0^2$}: Follows the same as the previous case, in contrast to the proof of Theorem (\ref{SSM of A02}), in which the $A_0^2$ check was non-trivial. This is for 2 reasons: 1) The formula we are checking currently features no Landweber-Novikov classes, and 2) The $A_1$ locus of $p_2$ is still empty, while the $A_0^2$ locus is not. Again both sides of Condition (\ref{star}) are 0, for the same reasons as previous case. 

    \textbf{Case $A_1$:} Finally, the $A_1$ locus of the $A_1$ prototype is the origin in $\C^\ell$. So the RHS of Condition (\ref{star}) is 
    \begin{align*}
    \text{RHS (\ref{star}}) = \ssm\big(0\subset \C^\ell\big)\big|_{\deg\ell+N} &= \frac{\csm\left(0\subset \C^\ell\right)}{(1+\alpha)(1+\beta_1)\cdots(1+\beta_\ell)}\Bigg|_{\deg\ell+N}.
    \end{align*}
By normalization, $\csm(0\subset \C^\ell)=i_*(1)$, where $i: 0\to \C^\ell$, so
    \begin{align*}
    &=\frac{\iota_*(1)}{(1+\alpha)(1+\beta_1)\cdots(1+\beta_\ell)}\Bigg|_{\deg\ell+N}\\
    &=\frac{\alpha\beta_1\cdots\beta_\ell}{(1+\alpha)(1+\beta_1)\cdots(1+\beta_\ell)}\Bigg|_{\deg\ell+N}\\
    &=(-1)^{N-1}h_{N-1}(\alpha,\beta_1,\dots,\beta_\ell)\cdot \alpha\beta_1\cdots \beta_\ell.
    \end{align*}
While the left hand side is 
    \begin{align*}
        \text{LHS (\ref{star})} = (-1)^{N-1}\sum_{k=0}^{N-1}S_{(N-1-k)}{\ell+N-2\choose k}c_{\ell+k+1}.
    \end{align*}
By the formula for $c(p_3)$ in degree strictly greater than $\ell$, 
    \begin{align*}
        &=(-1)^{N-1}\sum_{k=0}^{N-1}S_{(N-1-k)}{\ell+N-2\choose k}(-1)^k\alpha^{k+1}\beta_1\cdots\beta_\ell\\
        &=(-1)^{N-1}\sum_{k=0}^{N-1}S_{(N-1-k)}{\ell+N-2\choose k}(-\alpha)^k\cdot \alpha\beta_1\cdots \beta_\ell. 
    \end{align*}
Now verifying that LHS=RHS is exactly Lemma (\ref{last-step}) from the appendix, with $N$ replaced by $N-1$, the same final step as in the previous theorem. 
\end{proof}

As mentioned previously, determining two of the three SSM-Thom polynomials automatically yields the third.

\begin{corollary}\label{ssm A0}
In the same setting as Theorem (\ref{SSM of A02}),
\[
\text{ssm}(A_0,\ell)\Big|_{\text{deg }\ell+N} = \sum_{k=0}^NS_{(N-k)}\cdot c_{\ell+k}\cdot \left[{\ell+N-1\choose k}+{\ell+N-2\choose k-1}\right] 
 + (-1)^{N+1}S^{LN}_{(N)}
 \]
\end{corollary}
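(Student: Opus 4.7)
The plan is to derive this corollary directly from Proposition (\ref{local algs}) together with the two preceding results Theorem (\ref{SSM of A02}) and Theorem (\ref{ssm A1}), exactly along the lines of the remark made immediately before the corollary. The key observation is that the identity
\[
\ssm(A_0,\ell)\big|_{\leq \ell+N} + \ssm(A_0^2,\ell)\big|_{\leq \ell+N} + \ssm(A_1,\ell)\big|_{\leq \ell+N} = 1
\]
restricts, in the single cohomological degree $\ell + N \geq 1$, to the homogeneous linear relation
\[
\ssm(A_0,\ell)\big|_{\deg \ell+N} = -\ssm(A_0^2,\ell)\big|_{\deg \ell+N} - \ssm(A_1,\ell)\big|_{\deg \ell+N},
\]
since the constant $1$ on the right-hand side contributes only in degree zero. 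So $\ssm(A_0,\ell)\big|_{\deg \ell+N}$ is already determined by the previous two theorems; there is nothing new to verify via interpolation.

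With this relation in hand, the rest is bookkeeping. I would substitute the explicit formula from Theorem (\ref{SSM of A02}) for the $A_0^2$ term and from Theorem (\ref{ssm A1}) for the $A_1$ term. The Landweber--Novikov monomial $S^{LN}_{(N)}$ appears only in the $A_0^2$ formula with coefficient $(-1)^N$, which becomes $(-1)^{N+1}$ after moving to the right-hand side, matching the claimed statement. For the two Chern-class sums, the idea is to re-index the $A_1$ sum by $k \mapsto k - 1$ so that both sums share the common monomial $S_{(N-k)} c_{\ell + k}$; adopting the convention ${\ell+N-2 \choose -1} = 0$, the shifted $A_1$ sum can be extended down to $k = 0$, matching the range of the $A_0^2$ sum. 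The two sums then merge into a single sum whose coefficient of $S_{(N-k)} c_{\ell+k}$ is exactly $\left[ {\ell+N-1 \choose k} + {\ell+N-2 \choose k-1}\right]$, as in the corollary.

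There is no substantive obstacle here: the corollary is a direct algebraic consequence of the additivity of SSM classes encoded in Proposition (\ref{local algs}), combined with a single re-indexing of a summation. The only real care required is in tracking the factors of $(-1)^{N+1}$ from $\ssm(A_0^2,\ell)$ and $(-1)^{N-1}$ from $\ssm(A_1,\ell)$ as they are transposed across the degree-$(\ell+N)$ relation, and in invoking the convention for the boundary binomial coefficient at $k = 0$.
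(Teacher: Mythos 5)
Your approach is exactly the paper's: the proof given there is a one-line appeal to Proposition (\ref{local algs}) together with Theorems (\ref{SSM of A02}) and (\ref{ssm A1}), and your degree-by-degree restriction of the additivity relation plus the re-indexing $k\mapsto k-1$ is precisely the intended bookkeeping.

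One sign deserves attention, however. Carrying out the transposition you describe, $-\ssm(A_0^2,\ell)\big|_{\deg \ell+N}$ contributes $(-1)^{N}\sum_{k}S_{(N-k)}\binom{\ell+N-1}{k}c_{\ell+k}+(-1)^{N+1}S^{LN}_{(N)}$, and $-\ssm(A_1,\ell)\big|_{\deg \ell+N}$ contributes $(-1)^{N}\sum_{k}S_{(N-k)}\binom{\ell+N-2}{k-1}c_{\ell+k}$ after your shift. The merged Chern-class sum therefore carries an overall prefactor $(-1)^{N}$, which is absent both from your stated final coefficient and from the corollary as printed. The degree-$(\ell+1)$ specialization written out in Section 5, namely $\ssm(A_0(F),\ell)=\dots+\bigl(-c_1c_\ell-(\ell+1)c_{\ell+1}+s_1\bigr)$, confirms that the $(-1)^{N}$ must be present: at $N=1$ the formula without it would give $+c_1c_\ell+(\ell+1)c_{\ell+1}+s_1$, and the three SSM classes would then fail to sum to zero in that degree. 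So your method is correct, but your claim that the coefficient of $S_{(N-k)}c_{\ell+k}$ is ``exactly'' $\bigl[\binom{\ell+N-1}{k}+\binom{\ell+N-2}{k-1}\bigr]$ reproduces a sign typo in the statement rather than what your own computation actually yields; the honest output of your argument is that expression multiplied by $(-1)^{N}$.
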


\begin{proof}
Follows from Theorems (\ref{SSM of A02}), (\ref{ssm A1}), and Proposition (\ref{local algs})

\end{proof}

\section{Geometry of singularity loci of maps of projective spaces}

We show one application of calculating higher order extensions of the double point formula. In particular, once a formula for the SSM-Thom polynomial is determined, the Thom principle gives a concrete formula for the SSM class of the $\eta$-locus of \textit{any} $\mathcal C_\ell$-map. A theorem of Aluffi-Ohmoto connects the SSM class of a variety to the geometry of the variety, which we now recall. 

Let $X$ be a subvariety of $\bb P^n$, and define 
\[
X_r:=X \cap H_1\cap \dots \cap H_r
\]
where $H_i$ are general hyperplanes (so that $X_0=X$). We can consider the topological Euler characteristics of $X$ and $X_r$, and assemble these into a generating polynomial,
\[
\chi_X(t):=\sum_{r\geq 0}\chi(X_r)\cdot (-t)^r.
\]
$X_r$ will eventually be empty for high enough $r$, so this really is a polynomial. Let $\gamma_X(t) = \sum_{r\geq 0}\gamma_r t^r$ be the polynomial obtained from $\csm(X\subset \bb P^r)$ by replacing $[\bb P^r]$ with $t^r$. Define a transformation on the set of polynomials by the formula
\[
p(t)\mapsto \mathscr I(p):=\frac{tp(-t-1)+p(0)}{t+1}.
\]
$\mathscr I$ is an involution. 
\begin{theorem}[\cite{aluffi}\cite{ohm-integral}]\label{linear sections}
For every locally closed set $X\subset \mathbb P^n$, the involution $\mathscr I$ interchanges $\chi_X(t)$ and $\gamma_X(t)$. 
\end{theorem}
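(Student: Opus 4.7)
The plan is to reduce the identity $\mathscr I(\chi_X)=\gamma_X$ to a verification on linear subspaces, exploiting $\mathbb Q$-linearity in $\csm(X)$. First, both $X\mapsto\chi_X(t)$ and $X\mapsto\gamma_X(t)$ are additive under locally closed disjoint decompositions $X=Y\sqcup Z$: additivity of $\gamma_X$ comes directly from the additivity axiom of $\csm$, while additivity of $\chi_X$ follows by applying additivity of topological Euler characteristic to each $X_r=Y_r\sqcup Z_r$. The involution $\mathscr I$ is $\mathbb Q$-linear on polynomials, so the claimed identity is additive in $X$.

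The central input is the formula
\[
\csm(X_r\subset\mathbb P^n)=\frac{h^r\,\csm(X\subset\mathbb P^n)}{(1+h)^r}\quad\text{in }H^*(\mathbb P^n)=\mathbb Z[h]/(h^{n+1}),
\]
where $h$ is the hyperplane class. This is obtained iteratively in $r$ from the transverse-intersection formula for CSM classes of a Whitney stratified set against a smooth hypersurface, where Kleiman-type transversality ensures a generic choice of $H_i$ is transverse to a Whitney stratification of the previous intersection. Integrating over $\mathbb P^n$ yields $\chi(X_r)=\int_{\mathbb P^n} h^r\csm(X)/(1+h)^r$, so both $\chi_X(t)$ and $\gamma_X(t)$ factor as explicit $\mathbb Q$-linear functionals of the class $\csm(X)\in H^*(\mathbb P^n)$.

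Now $\csm(\mathbb P^k\subset\mathbb P^n)=(1+h)^{k+1}h^{n-k}$ for $k=0,\dots,n$, and these form an upper-triangular basis of $H^*(\mathbb P^n)$ in the basis $\{[\mathbb P^k]\}$. By $\mathbb Q$-linearity, it suffices to verify the identity for $X=\mathbb P^k$. There $(\mathbb P^k)_r=\mathbb P^{k-r}$ (empty for $r>k$), giving
\[
\chi_{\mathbb P^k}(t)=\sum_{r=0}^k(k+1-r)(-t)^r,\qquad \gamma_{\mathbb P^k}(t)=\sum_{j=0}^k\binom{k+1}{j}t^{k-j}.
\]
The identity $\mathscr I(\chi_{\mathbb P^k})=\gamma_{\mathbb P^k}$, equivalently $(t+1)\gamma_{\mathbb P^k}(t)=t\chi_{\mathbb P^k}(-t-1)+\chi_{\mathbb P^k}(0)$, then reduces to a direct binomial manipulation (and is trivial for $k=0$, giving $\mathscr I(1)=1$).

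The main obstacle is establishing the CSM transversality formula when $X$ is singular or not closed; while classical for smooth projective $X$, the general case uses Whitney stratifications and requires that a single tuple of generic hyperplanes be simultaneously transverse to all relevant strata. Additivity absorbs some of this by reducing to locally closed smooth strata, but coordinating the generic choice across all strata is the technical heart of the argument.
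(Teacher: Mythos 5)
This theorem is imported: the paper states it as a quoted result of Aluffi and Ohmoto and gives no proof of its own, so there is no internal argument to compare against. Your proposal is, in essence, the argument from the cited literature, and its architecture is sound: additivity of both $\chi_X$ and $\gamma_X$ in locally closed decompositions, $\mathbb Q$-linearity of $\mathscr I$, the generic hyperplane section formula $\csm(X\cap H)=\frac{h}{1+h}\csm(X)$ iterated to give $\chi(X_r)=\int_{\mathbb P^n}h^r\csm(X)/(1+h)^r$, and then verification on the basis $\csm(\mathbb P^k\subset\mathbb P^n)=h^{n-k}(1+h)^{k+1}$ of $H^*(\mathbb P^n)_{\mathbb Q}$ (your computations of $\chi_{\mathbb P^k}$ and $\gamma_{\mathbb P^k}$ and the resulting binomial identity check out, e.g.\ $t\chi_{\mathbb P^2}(-t-1)+3=(t+1)(t^2+3t+3)$).

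The one place where the write-up is thin is exactly the place you flag: the hyperplane-section formula for an arbitrary locally closed $X$ is not a consequence of a generic "transverse-intersection formula" that one can simply iterate off the shelf; it is itself the main theorem of Aluffi's paper. The clean way to justify it is the Verdier--Riemann--Roch / non-characteristic pullback property of CSM classes (Schürmann): if $H$ is transverse to a Whitney stratification of $\overline X$ adapted to $X$, then $\csm(\mathbf 1_{X\cap H})=c(N_H\mathbb P^n)^{-1}\cap\bigl(H\cdot\csm(\mathbf 1_X)\bigr)$, which is precisely $\frac{h}{1+h}\csm(X)$ after pushforward; alternatively Aluffi reduces to the smooth case by resolution of singularities and the pushforward axiom. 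Either way one must choose the tuple $(H_1,\dots,H_r)$ generically enough to be transverse to a fixed stratification at every stage (and, for the additivity of $\chi_X$, simultaneously generic for $Y$, $Z$, and $X$), which is fine since finitely many dense open conditions on the space of flags intersect nontrivially. With that input properly cited, your reduction of the involution statement to the section formula plus the $\mathbb P^k$ computation is a complete and correct proof.
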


In other words, if you know the SSM class of a subvariety of $\mathbb P^n$, you can obtain the Euler characteristics of general linear sections of $X$ through some simple polynomial transformations. 

For any $\ell\geq 2$, let $F: \mathbb P^{\ell+1}\to \mathbb P^{2\ell+1}$ be a $\mathcal C_\ell$-map of degree $d$, and let $\Sigma :=A_0^2(F)\subset \mathbb{P}^{\ell+1}$. This family of maps is chosen so that only the first order deformations of the Thom polynomials appear, for simplicity. In general for a chosen domain dimension, $N$, the higher order deformations we calculated in Section 3 can be utilized to obtain (non-trivial) analogous results in dimensions $\bb P^N\to \bb P^{N+\ell}$ for $\ell \in \left(\frac n2,n\right)$, non-inclusive. The family considered in this section corresponds to the first interesting case, $\ell=\lceil\frac n2\rceil$.

Target-local stability of $F$ for this choice of domain and codomain dimension implies that $F$ can have singularities only of types $A_0,A_0^2$ and $A_1$, and they must be of dimensions $\ell+1,1$, and 0, respectively. The closure $A_0^2(F)$ is then 
\[
\overline{A_0^2(F)} = A_0^2(F) \cup A_1(F).
\]

\begin{proposition}
We have that $A_1(F) \subset A_0^2(F)$ is smooth.
\end{proposition}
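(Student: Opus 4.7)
The plan is to verify smoothness locally on $\overline{A_0^2(F)} = A_0^2(F) \cup A_1(F)$ by applying the local normal forms coming from target-local stability at points of each stratum separately.

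At an open double point $p \in A_0^2(F)$ with companion $p' \neq p$, target-local stability provides an $\mathcal{A}$-equivalence between the bigerm of $F$ at $\{p,p'\}$ and an $\mathcal{A}$-trivial unfolding of the $A_0^2$ prototype $p_2$ from Theorem \ref{SSM of A02}. In particular $F$ restricts to embeddings on small neighborhoods $U \ni p$ and $U' \ni p'$, and the image sheets $F(U), F(U')$ are smooth $(\ell+1)$-dimensional submanifolds of $N$ meeting transversally at $F(p) = F(p')$. Since $(\ell+1)+(\ell+1) > 2\ell+1 = \dim N$, the transverse intersection is a smooth $1$-dimensional submanifold of $N$ near $F(p)$, whose preimage under the embedding $F|_U$ is the germ of $\overline{A_0^2(F)}$ at $p$. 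Hence $\overline{A_0^2(F)}$ is smooth (and $1$-dimensional) at $p$.

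At an $A_1$ point $q \in A_1(F)$, target-local stability gives an $\mathcal{A}$-equivalence of the monogerm of $F$ at $q$ with the $A_1$ prototype $p_3$ from Theorem \ref{SSM of A02}. In the course of that theorem's proof, we computed explicitly that $A_0^2(p_3) = \mathbb{C}_x \setminus \{0\}$ in $\mathbb{C}^{\ell+1}$, so its closure is the full $x$-axis $\mathbb{C}_x$, which is a smooth linear subvariety. Transferring back along the $\mathcal{A}$-equivalence shows $\overline{A_0^2(F)}$ is smooth at $q$ as well.

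The two cases cover every point of $\overline{A_0^2(F)}$, establishing the proposition. The main subtle step is justifying the transversality of the two sheets at $A_0^2$ points: this is built into the Mather-theoretic classification of $A_0^2$ as a bigerm whose branches are two immersions meeting transversally, and target-local stability ensures that this local model is realized at every $A_0^2$ point of $F$. Once transversality is granted, the rest reduces to standard manifold theory.
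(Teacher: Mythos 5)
Your argument for the $A_1$ points is essentially the paper's proof: reduce via target-local stability to the $A_1$ prototype $p_3$ and invoke the explicit computation $A_0^2(p_3)=\C_x\setminus\{0\}$ from the proof of Theorem (\ref{SSM of A02}), whose closure is the smooth $x$-axis. Your additional verification at the open $A_0^2$ stratum (transverse sheets of dimension $\ell+1$ in $\C^{2\ell+1}$) is correct and complementary, but the proposition's content is the $A_1$ case, where you and the paper coincide.
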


\begin{proof}
Suppose $p \in A_0^2$ is also an $A_1$ point. Because the $A_1$ locus is of dimension 0, we can assume $p$ is isolated, and thus has a neighborhood (a 1-dimensional neighborhood  within the $A_0^2$ locus) of $A_0^2$ points. As remarked in the proof of Theorem (\ref{SSM of A02}), the $A_0^2$ locus of the prototype for $A_1$ is of the form $(*,0,\dots,0)\subset \C^{\ell+1}$, which is a smooth inclusion. 

\end{proof}

In particular, $\overline{A_0^2(F)}$ is a Riemann sphere with marked points corresponding to the $A_1$ locus. 

To apply Theorem (\ref{linear sections}) we must compute the CSM class, and ultimately we will show that the topology of the locus is determined only by $d$, so we want to express the SSM classes in terms of $d$. From the theorems of Section 4, we have 

\begin{gather*}
\operatorname{ssm}(A_0(F),\ell) = 1 + (c_\ell-s_0) +(-c_1c_\ell-(\ell+1)c_{\ell+1}+s_1)\\
= 1 + (c_\ell-s_0) +(-c_1(c_\ell-s_\emptyset)-(\ell+1)c_{\ell+1})\\\\
\operatorname{ssm}(A_0^2(F),\ell) = (s_0-c_\ell) + (c_1c_\ell + \ell c_{\ell+1}-s_1) 
\\
=(s_\emptyset-c_\ell)+(c_1(c_\ell-s_\emptyset)+\ell c_{\ell+1})\\\\
\operatorname{ssm}(A_1(F),\ell) = c_{\ell+1} 
\end{gather*}
where $c$ and $s$ are the quotient Chern and Landweber-Novikov classes of $F$, and all higher order terms are 0. Let $h$ and $H$ denote the hyperplane class of $\mathbb P^{\ell+1}$ and $\bb P^{2\ell+1}$, respectively. As $F$ is of degree $d$,
\[
F^*(H)=dh.
\]
By definition, 
\[
c(F) = \frac{F^*(1+H)^{2\ell+2}}{(1+h)^{\ell+2}}\equiv \frac{(1+dh)^{2\ell+2}}{(1+h)^{\ell+2}}.
\]
The degree $k$ portion can be written as (now we suppress the $F$ dependence notation)

\[
c_k = (-h)^k\sum_{a=0}^k{2\ell+2\choose a}{k-a+\ell+1\choose \ell+1}(-d)^a.
\]
The only terms involved in this example are $c_1, c_\ell$ and $c_{\ell+1}$, which are thus 

\begin{gather*}
c_1=(-\ell-2+2\ell d+2d)h\\
c_\ell = h^\ell\sum_{a=0}^\ell{2\ell+2\choose a}{2\ell-a+1\choose \ell+1}d^a(-1)^{\ell-a}\\
c_{\ell+1} = {2\ell+2\choose \ell+1}(d-1)^{\ell+1}h^{\ell+1}.
\end{gather*}
The Landweber-Novikov classes can be computed as
\begin{lemma}
\[
s_\emptyset = d^{2\ell+1}h^\ell
\]
\[
s_1=d^{2\ell+1}\big(-\ell-2+2\ell d+2d\big)h^{\ell+1}.
\]
\end{lemma}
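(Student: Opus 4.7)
The plan is to reduce both pushforward computations to one-dimensional pairings via the projection formula, and then apply $F^*$ using $F^*(H)=dh$. Since $H^*(\mathbb{P}^{\ell+1})$ and $H^*(\mathbb{P}^{2\ell+1})$ are each generated by a single hyperplane class, every homogeneous cohomology class is a scalar multiple of a power of $h$ or $H$, and the scalar is detected by integrating against the complementary power. This makes both pushforwards computable by a single integral each.

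For $s_\emptyset = F^*F_*(1)$, write $F_*(1) = \alpha H^\ell \in H^{2\ell}(\mathbb{P}^{2\ell+1})$ and solve for $\alpha$ by pairing against the complementary class $H^{\ell+1}$. The projection formula gives
\[
\alpha = \int_{\mathbb{P}^{2\ell+1}} F_*(1)\cdot H^{\ell+1} = \int_{\mathbb{P}^{\ell+1}} F^*(H)^{\ell+1} = \int_{\mathbb{P}^{\ell+1}} (dh)^{\ell+1} = d^{\ell+1},
\]
so $s_\emptyset = F^*(d^{\ell+1} H^\ell) = d^{\ell+1}(dh)^\ell = d^{2\ell+1}h^\ell$, as claimed.

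For $s_1 = F^*F_*(c_1(F))$, use the fact, computed just above the lemma, that $c_1(F) = A h$ with $A = -\ell-2+2\ell d+2d$. By $\mathbb{Q}$-linearity of $F^*$ and $F_*$, it suffices to compute $F^*F_*(h)$ and multiply by $A$. Writing $F_*(h) = \beta H^{\ell+1}$ and pairing with $H^\ell$ yields
\[
\beta = \int_{\mathbb{P}^{2\ell+1}} F_*(h)\cdot H^\ell = \int_{\mathbb{P}^{\ell+1}} h \cdot (dh)^\ell = d^\ell,
\]
so $F^*F_*(h) = d^\ell(dh)^{\ell+1} = d^{2\ell+1}h^{\ell+1}$. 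Substituting gives $s_1 = A\cdot d^{2\ell+1}h^{\ell+1}$, which is the claimed expression.

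There is no serious obstacle: the computation is essentially a direct application of the projection formula in the simplest possible cohomology rings. The only care needed is bookkeeping of dimensions so that each pushforward class is paired against the correct complementary power of $H$, and keeping separate the two distinct roles of $d$, namely the map degree entering through $F^*(H)=dh$ and the scalar degrees $\alpha,\beta$ arising from the projection-formula integrals.
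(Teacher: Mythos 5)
Your argument is correct and is essentially the paper's: both rest on the projection formula together with the fact that every class on $\mathbb P^{\ell+1}$ is rationally a pullback of a power of $H$ (the paper phrases this as the "one modification" needed to reuse the computations of Lemma \ref{LN classes}, then gets $s_1$ from $s_\emptyset$ via $c_1(F)$). Your version simply makes the coefficient extraction explicit by pairing $F_*(1)$ and $F_*(h)$ against the complementary powers of $H$, which is a clean and complete way to carry out the same computation.
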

\begin{proof}
The first equation can be deduced from exactly the same computations as in Lemma (\ref{LN classes}), with only one modification. A crucial step in the proof of Lemma (\ref{LN classes}) is the assumption that every cohomology class in the source is the pullback of a class from the target, which held because $F$ was a germ. Now $F$ is not a germ but a global map, but in order to apply the projection formula, we still need this assumption. In this case, it is still true because we study maps of projective spaces: every cohomology class in the source is (rationally) a pullback of a class in the target. 

The second equation follows from the first, also with projection formula and the explicit formula for $c_1(F)$. 
\end{proof}
Using synthetic division, we can calculate that 
\begin{gather*}
 c_\ell-s_\emptyset = -(d-1)\left(-\sum_{\alpha=\ell}^{2\ell}d^\alpha+(-1)^\ell\sum_{n=0}^{\ell-1}\left(\sum_{\alpha=0}^{n}(-1)^\alpha{2\ell+2\choose \alpha}{2\ell+1-\alpha\choose \ell+1}+(-1)^{\ell+1}\right)d^n\right).
\end{gather*}
It will not be especially productive to refer to the quantity $c_\ell-s_\emptyset$ in this form, but it is notable that $(d-1)$ divides it. In what follows, we abbreviate the coefficient of $c_\ell$ as $\Sigma_\ell$,

\[
\Sigma_\ell:=\sum_{a=0}^\ell{2\ell+2\choose a}{2\ell-a+1\choose \ell+1}d^a(-1)^{\ell-a}
\]
thus write $c_\ell-s_\emptyset\equiv(\Sigma_\ell-d^{2\ell+1})h^\ell$. The point is this is a finite, easily computable quantity. Substituting in, we obtain
\begin{gather*}
\text{ssm}(A_0(F),\ell) = 1 + (\Sigma_\ell-d^{2\ell+1})h^\ell + \\
\Big((\ell+2-2\ell d-2d)\cdot (\Sigma_\ell-d^{2\ell+1}) - (\ell+1){2\ell+2\choose \ell+1}(d-1)^{\ell+1}\Big)h^{\ell+1}\\\\
\text{ssm}(A_0^2(F),\ell) =\left(d^{2\ell+1}-\Sigma_\ell\right)h^\ell \\
+ \left((-\ell-2+2\ell d+2d)\cdot (\Sigma^\ell-d^{2\ell+1})+(\ell){2\ell+2\choose \ell+1}(d-1)^{\ell+1} \right)h^{\ell+1}\\\\
\text{ssm}(A_1(F),\ell) = {2\ell+2\choose \ell+1}(d-1)^{\ell+1}h^{\ell+1}.
\end{gather*}
Let us also remark that 

\begin{gather*}
\overline{A_0^2(F)} = A_0^2(F) \sqcup A_1(F),
\end{gather*}
so again using additivity, 

\begin{gather*}
\ssm(\overline{A_0^2(F)},\ell) = \ssm(A_0^2(F),\ell)+\ssm(A_1(F),\ell)\\\\
=\left(d^{2\ell+1}-\Sigma_\ell\right)h^\ell \\
\left((-\ell-2+2\ell d+2d)\cdot (\Sigma_\ell-d^{2\ell+1})+(\ell+1){2\ell+2\choose \ell+1}(d-1)^{\ell+1} \right)h^{\ell+1}.
\end{gather*}
To convert to CSM classes, we multiply by $c(T\mathbb P^{\ell+1})=(1+h)^{\ell+2}\big|_{h^{\ell+2}}=0$, and obtain

\begin{gather*}
\text{csm}(A_0(F),\ell) 
=1+{\ell+2\choose 1}h+{\ell+2\choose 2}h^2 + \dots + {\ell+2\choose \ell-1}h^{\ell-1}\\
+\Big[(\Sigma_\ell-d^{2\ell+1}) + {\ell+2\choose \ell}\Big]h^\ell\\
+\left[(2\ell+4-2\ell d-2d)(\Sigma_\ell-d^{2\ell+1})-(\ell+1){2\ell+2\choose \ell+1}(d-1)^{\ell+1}+\ell+2\right]h^{\ell+1}
\end{gather*}

\begin{gather*}
\csm(A_0^2(F),\ell) =(d^{2\ell+1}-\Sigma_\ell)h^\ell \\
+\left(2(\ell d+d-\ell-2)\cdot (\Sigma_\ell-d^{2\ell+1})+(\ell){2\ell+2\choose \ell+1}(d-1)^{\ell+1} \right)h^{\ell+1}
\end{gather*}
and finally,

\[
\text{csm}(A_1,\ell)= {2\ell+2\choose \ell+1}(d-1)^{\ell+1}h^{\ell+1}.
\]
All formulas are now written depending only $d,\ell$. We assemble the polynomial Aluffi refers to as $\gamma_{Q}(t)$ by interchanging powers of $h$ and $t$ reciprocally, and obtain 

\begin{gather*}
\gamma_{A_0}(t) = t^{\ell+1} + (\ell+2)t^\ell + {\ell+2\choose 2}t^{\ell-1} + \dots + {\ell+2\choose \ell-1}t^2\\
+ \Big(\Sigma_\ell-d^{2\ell+1}+{\ell+2\choose \ell}\Big)t\\
\left[(2\ell+4-2\ell d-2d)(\Sigma_\ell-d^{2\ell+1})-(\ell+1){2\ell+2\choose \ell+1}(d-1)^{\ell+1}+\ell+2\right]\\\\
 \gamma_{A_0^2}(t) = (d^{2\ell+1}-\Sigma_\ell)t + \Big(2(-\ell d+d-\ell-2)\cdot (\Sigma_\ell-d^{2\ell+1})+(\ell){2\ell+2\choose \ell+1}(d-1)^{\ell+1}\Big)\\\\
 \gamma_{A_1}(t)={2\ell+2\choose \ell+1}(d-1)^{\ell+1}.
\end{gather*}
Next we apply involutions $\mathscr I_X(t)$. An equivalent way to formulate the transformation $\mathscr I$ is to separate polynomial $p$ into its degree 0 part and degree $>0$ part, $p(t) = p(0)+t\cdot p_+(t)$. Then 

\[
\mathscr I(p(t)) = p(0) - t\cdot p_+(-t-1).
\]
In this form it is clear to see that $\gamma_{A_1}$ is unchanged, and $\gamma_{A_0^2}$ has only a sign change on its linear term,

\begin{gather*}
\mathscr I(\gamma_{A_0^2})(t) =  (\Sigma_\ell-d^{2\ell+1})t + \Big(2(-\ell d+d-\ell-2)\cdot (\Sigma_\ell-d^{2\ell+1})+\ell{2\ell+2\choose \ell+1}(d-1)^{\ell+1}\Big)\\\\
\mathscr I(\gamma_{A_1})(t) ={2\ell+2\choose \ell+1}(d-1)^{\ell+1}.
\end{gather*}
Computing $\mathscr I(\gamma_{A_0})(t)$ requires more work but yields satisfying answer: In particular, the $t^2$ and above terms are independent of $d$, which we will see later must have been the case,

\begin{lemma}
\begin{gather*}
\mathscr I(\gamma_{A_0})(t) = \left[(2\ell+4-2\ell d-2d)(\Sigma_\ell-d^{2\ell+1})-(\ell+1){2\ell+2\choose \ell+1}(d-1)^{\ell+1}+\ell+2\right]\\
-t\cdot \left[\left(\Sigma_\ell-d^{2\ell+1}+\ell+1\right) + \sum_{n=1}^\ell (-t)^n\cdot(\ell+1-n)\right].
\end{gather*}
\end{lemma}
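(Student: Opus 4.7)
The plan is to apply the definition of $\mathscr I$ in the equivalent form $\mathscr I(p)(t)=p(0)-t\cdot p_+(-t-1)$, where $p(t)=p(0)+t\cdot p_+(t)$ is the splitting of $p$ into its constant term and positive-degree part. For $p=\gamma_{A_0}$ the constant term $p(0)$ is exactly the bracketed expression that already appears as the first factor of the claimed formula, so that piece is immediate; all the work is in identifying $p_+(-t-1)$.

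Reading off $p_+(t)$ from the expression for $\gamma_{A_0}(t)$, I will split it as $p_+(t)=(\Sigma_\ell-d^{2\ell+1})+\tilde p(t)$, where the anomalous constant absorbs the non-binomial contribution to the coefficient of $t$ and $\tilde p(t)=\sum_{k=0}^{\ell}\binom{\ell+2}{\ell-k}t^k$ is the purely binomial part coming from $c(T\mathbb P^{\ell+1})$. Using the symmetry $\binom{\ell+2}{\ell-k}=\binom{\ell+2}{k+2}$, the polynomial $\tilde p$ can be recognized as
\[
\tilde p(t)=\frac{(1+t)^{\ell+2}-1-(\ell+2)t}{t^{2}},
\]
i.e.\ the Chern polynomial of $T\mathbb P^{\ell+1}$ with its two lowest-degree terms stripped and then divided by $t^{2}$. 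Substituting $t\mapsto-t-1$ and using $1+(-t-1)=-t$ yields
\[
\tilde p(-t-1)=\frac{(-t)^{\ell+2}+(\ell+2)t+\ell+1}{(t+1)^{2}},
\]
and after the cosmetic substitution $u=-t$ the task reduces to establishing the polynomial identity
\[
\frac{u^{\ell+2}-(\ell+2)u+(\ell+1)}{(u-1)^{2}}=\sum_{n=0}^{\ell}(\ell+1-n)\,u^{n}.
\]

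This identity is the one genuine obstacle, and I would handle it by double-factorization. Write $u^{\ell+2}-(\ell+2)u+(\ell+1)=(u^{\ell+2}-1)-(\ell+2)(u-1)$, apply the telescoping $u^{\ell+2}-1=(u-1)\sum_{k=0}^{\ell+1}u^{k}$ so that the numerator becomes $(u-1)\sum_{k=0}^{\ell+1}(u^{k}-1)$, and then apply $u^{k}-1=(u-1)\sum_{j=0}^{k-1}u^{j}$ termwise. The result is $(u-1)^{2}\sum_{k=1}^{\ell+1}\sum_{j=0}^{k-1}u^{j}$; counting the number of pairs $(k,j)$ with fixed $j$ gives the multiplicity $\ell+1-j$, which after dividing by $(u-1)^{2}$ is precisely the right-hand side.

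Finally, I will translate the identity back to the $t$-variable via $u=-t$, which converts $\sum_{n=0}^{\ell}(\ell+1-n)u^{n}$ into $(\ell+1)+\sum_{n=1}^{\ell}(\ell+1-n)(-t)^{n}$. Adding the anomalous constant $\Sigma_\ell-d^{2\ell+1}$ (which was set aside when splitting $p_+$) reproduces exactly the bracketed content of the second factor in the lemma, and multiplying by $-t$ gives the stated formula. The remainder is pure bookkeeping; once the polynomial identity above is in hand, the lemma follows.
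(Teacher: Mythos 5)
Your proposal is correct, and it fills a genuine gap: the paper states this lemma with only the remark that the computation ``requires more work'' and gives no argument, so there is no paper proof to compare against. Every step of your derivation checks out. The split $p_+(t)=(\Sigma_\ell-d^{2\ell+1})+\tilde p(t)$ correctly isolates the $d$-dependence in the linear coefficient of $\gamma_{A_0}$; the symmetry $\binom{\ell+2}{\ell-k}=\binom{\ell+2}{k+2}$ does identify $\tilde p(t)$ with $\bigl((1+t)^{\ell+2}-1-(\ell+2)t\bigr)/t^2$; and the substitution $t\mapsto -t-1$ followed by $u=-t$ reduces everything to the identity
\[
\frac{u^{\ell+2}-(\ell+2)u+(\ell+1)}{(u-1)^{2}}=\sum_{n=0}^{\ell}(\ell+1-n)\,u^{n},
\]
which your double telescoping ($u^{\ell+2}-1=(u-1)\sum_k u^k$, then $u^k-1=(u-1)\sum_j u^j$, then counting pairs to get multiplicity $\ell+1-j$) establishes cleanly; I also confirmed it directly for $\ell=2$. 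Reassembling $p_+(-t-1)=(\Sigma_\ell-d^{2\ell+1}+\ell+1)+\sum_{n=1}^{\ell}(\ell+1-n)(-t)^n$ and applying $\mathscr I(p)(t)=p(0)-t\,p_+(-t-1)$ (the equivalent form of $\mathscr I$ the paper itself records) gives exactly the stated formula. As a bonus, your closed form makes transparent the paper's observation that the $t^2$-and-higher coefficients of $\mathscr I(\gamma_{A_0})$ are independent of $d$, since all $d$-dependence of $p_+$ sits in its constant term.
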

We also assemble the generating function of topological Euler characteristics of general linear sections:

$$
\chi_X(t) = \chi(X) - \chi(X \cap H) t + \chi(X\cap H_1\cap H_2)t^2 - \dots
$$
and Theorem (\ref{linear sections}) states that
$$
\chi_X(t) = \mathscr I(\gamma_{X})(t)
$$
for $X$ our 3 singularity loci. In $t$-degree 0, this calculates the topological Euler characteristics of all singularity loci: 

\begin{gather*}
\chi\big(A_0(F)\big) =  \left[(2\ell+4-2\ell d-2d)(\Sigma_\ell-d^{2\ell+1})-(\ell+1){2\ell+2\choose \ell+1}(d-1)^{\ell+1}+\ell+2\right]\\\\
 \chi\big(A_0^2(F)\big) = 2(-\ell d+d-\ell-2)\cdot (\Sigma_\ell-d^{2\ell+1})+\ell{2\ell+2\choose \ell+1}(d-1)^{\ell+1}\\\\
 \chi\big(A_1(F)\big) = {2\ell+2\choose \ell+1}(d-1)^{\ell+1}
 \end{gather*}
 \begin{align*}
 \chi\big(\overline{A_0^2(F)}\big) &= \chi\big(A_0^2(F)\big) + \chi\big(A_1(F)\big)\\
 &=2(-\ell d+d-\ell-2)\cdot(\Sigma_\ell-d^{2\ell+1})+(\ell+1){2\ell+2\choose\ell+1}(d-1)^{\ell+1}.
\end{align*}
We remarked previously that the quantity $\Sigma_\ell-d^{2\ell+1}\equiv c_\ell-s_\emptyset$ is divisible by $(d-1)$. From this it is clear to see that when $d=1$,

\[
\chi\big(A_0^2(F)\big)=\chi\big(A_1(F)\big) =\chi\big(\overline{A_0^2(F)}\big)= 0
\]
and $\chi\big(A_0(F)\big) = \ell+2$, which is the Euler characteristic of $\mathbb P^{\ell+1}$. This statement is expected geometrically: When $d=1$, $F$ must be an inclusion along coordinate subspaces, which must have no $A_0^2$ or $A_1$ points. 

What is less easy to see is that, without choosing a particular $d$,  

\[
\chi\big(A_0(F)\big) + \chi\big(A_0^2(F)\big) + \chi\big(A_1(F)\big) = \ell+2
\]
but a computer algebra system can do this. This equality is much more complicated than the previous, as all 3 terms can be non-zero, and vary with $d$. Geometrically, it is a manifestation of the stability of $F$ in our chosen dimension pairs, as the domain must consist of only $A_0,A_0^2$, and $A_1$ points. 

\begin{question*}
    
Further, experiments suggest that for all $\ell$, $\chi(A_0^2(F))$ is divisible by $(d-2)$, but not $\chi\big(\overline{A_0^2(F)}\big)$. What geometric interpretation can be given for this fact?

\end{question*}

We list some values in the case of $\ell=2$, corresponding to a degree $d$ map $\mathbb P^3\to \mathbb P^5$, in Table (\ref{Euler chars}).

\begin{table}
    \centering
    \begin{tabular}{|c|c|c|c|c|c|c|}
         \hline&  $d=1$  &$d=2$  &$d=3$  &$d=4$  &$d=5$ \\\hline
         $\chi(A_0)$&  4& -16 &1224  &12304  &59084  \\\hline
         $\chi(A_0^2)$&0  &0  &-1380  &-12840  &-60360   \\\hline
         $\chi(A_1)$&  0&20  &160  &540  &1280   \\\hline
    \end{tabular}
    \caption{Euler characteristics of singularity loci for a degree $d$ map $\bb P^3\to \bb P^5$. The sum of any column is equal to 4, which is the Euler characteristic of $\bb P^3$.}
    \label{Euler chars}
\end{table}
In $t$-degree 1, we have

\begin{gather*}
\chi\big(A_0(F)\cap H\big) = \Sigma_\ell-d^{2\ell+1}+\ell+1\\\\
\chi\big(\overline{A_0^2(F)}\cap H \big) = \chi\big(A_0^2(F)\cap H\big) = \Sigma_\ell-d^{2\ell+1}\\\\
\chi\big(A_1(F)\cap H\big) = 0.
\end{gather*}
 The sum of all 3 is equal to $\ell+1$, which is the Euler characteristic of $\mathbb P^{\ell}=\mathbb P^{\ell+1}\cap H$. $A_1(F)\cap H$ is empty, so the third equality is also expected. 

As $\overline{A_0^2(F)}\cap H$ is 0-dimensional, the formula $\Sigma_\ell-d^{2\ell+1}$ simply counts the number of points. If $X\subset \mathbb P^n$ is a subvariety of dimension $r$, its intersection with $r$ generic hyperplanes, $X\cap H_1 \cap \dots \cap H_r$ is a finite collection of points, and the number of points is known as the \textit{degree} of $X$. So we have 
\[
\deg\big(\overline{A_0^2(F)}\big)=\Sigma_\ell-d^{2\ell+1}.
\]
Some values of Euler characteristics and degrees of $\overline{A_0^2(F)}$ are presented in Table (\ref{degrees}). 
\begin{table}
    \centering
    \begin{tabular}{|c|c|c|c|c|c|c|}
        \hline
         &$\ell=1$  &$\ell=2$  & $\ell=3$ \\\hline
         $d=1$&  (0,0)& (0,0) & (0,0)   \\\hline
         $d=2$&  (6,3)&  (20,10)&  (70,35)  \\\hline
         $d=3$&  $(-60,18)$&  (-1220,170)&(-18060,1610)     \\\hline
         $d=4$&  $(-402,51)$& (-12300,870) &  (-298410,14595)     \\\hline
         $d=5$&  (-1320,108)& (-59080,2860) &(-10168410,272195)       \\\hline
    \end{tabular}
    \caption{Ordered pairs $(\chi,D)$ where $\chi$ is the topological Euler characteristic and $D$ is the degree of the smooth surface $\overline{A_0^2(f)}$. }
    \label{degrees}
\end{table}

For $t$-degrees 2 and above, only $A_0(F)$ has a contribution, which simply says, for $i=2,\dots,\ell+1$,
\[
\chi\big(A_0(F)\cap H_1\cap\cdots\cap H_i\big) = (\ell+1)-(i-1)
\]
and indeed is independent of $d$. 

\subsection{Application to complete intersections}

A subvariety $X \subset \mathbb P^n$ is called a \textit{complete intersection} if it is an intersection of codimension 1 subvarieties. For example when $n=3$, 1-dimensional complete intersections are of the form $H_1\cap H_2$, for $H_i\subset \mathbb P^3$ hyperplanes. 

Though ``most'' subvarieties are not complete intersections, determining whether a given subvariety is a complete intersection is a difficult problem. 

Again considering a $\mathcal C_\ell$-map $F: \mathbb P^{\ell+1}\to \mathbb P^{2\ell+1}$, $X:=\overline{A_0^2(F)}$ is a Riemann surface. If $X$ were a complete intersection of subvarieties of degrees $(d_1,\dots,d_\ell)$, $S_1,\dots,S_\ell$, its topological Euler characteristic would be 

\[
\chi_{CI}(d_1,\dots,d_\ell) = -\prod_{i=1}^\ell d_i \left(\sum_{i=1}^\ell d_i-(\ell+2)\right)
\]
while its degree would be $\deg_{CI}(d_1,\dots,d_\ell)=d_1d_2\cdots d_\ell$.

On the other hand, we just obtained formulas for the Euler characteristic and degree of $X$, depending only on the relative dimension $\ell$ and degree of $F$, $d$, (note that there are two independent notions of degree being discussed)
\[
\chi_X(d) = (-2\ell d+2d-2\ell -4)(\Sigma_\ell-d^{2\ell+1})+(\ell+1){2\ell+2\choose \ell+1}(d-1)^{\ell+1}
\]
\[
\deg_X(d)=\Sigma_\ell-d^{2\ell+1}.
\]
If $X$ were a complete intersection, then it must be the case that $\chi_{CI}(d_1,\dots,d_\ell) = \chi_X(d)$ and $\deg_{CI}(d_1,\dots,d_\ell) = \deg_X(d)$. For concrete values of $\ell$, this system can be shown to be inconsistent.

\begin{example}
    Let us take the case $\ell=2$ $(\ell=1$ is trivial in this context), and suppose $F: \bb P^{\ell+1}\to \bb P^{2\ell+1}$ is a map of degree $d\geq 0$. Then for $X:=\overline{A_0^2(F)}$ to be a complete intersection, it must be that the quantity $\deg_X(d)=\Sigma_2-d^5$ admits a pair of complementary divisors, $d_1,d_2$, such that
    \begin{gather*}
    \chi_{CI}(d_1,d_2) = \chi_X(d).
    \end{gather*}
    In this case, the formulas found reduce to (replacing $d_1d_2$ with $\deg_X(d)$)
    \begin{gather*}
    \chi_{CI}(d_1,d_2) =-(d^5-15d^2+24d-10)\cdot (d_1+d_2-4) \\
    \chi_X(d) = -6d^6+8d^5+150d^3-444d^2+432d-140
    \end{gather*}
    and we aim to show that no pair $d_1,d_2$ can render the equation $\chi_{CI}(d_1,d_2) =\chi_X(d)$ true, by estimating (both quantities are negative), that $\chi_{CI}$'s maximal value is strictly less than $\chi_X$. Specifically, as $d$ increases past $d=1$, $\chi_X$ tends to $-\infty$ much faster than $\chi_{CI}$. Thinking of $d$ as a fixed parameter, $\chi_{CI}(d_1,d_2)$ is maximized when $d_1+d_2$ is minimized. Of course,
    \[
    d_1+d_2 = d_1 + \frac{\deg_X(d)}{d_1}
    \]
    and the first derivative test shows that $d_1+d_2$ is minimized, subject to the constraint $d_1d_2 = \deg_X(d)$, at $d_1 = d_2=\sqrt{\deg_X(d)}$. Therefore 
    \[
    \chi_{CI}^{max} = -(d^5-15d^2+24d-10)\cdot (2\cdot \sqrt{\deg_X(d)}-4)
    \]
    is the maximal quantity $\chi_{CI}(d_1,d_2)$ can achieve under the constraint on $d_1,d_2$ (and depends only on $d$). Yet we claim 
    \begin{align*}
    &\chi_X(d)-\chi^{max}_{CI}(d)\\
    &= ( 2{d}^{5}-30{d}^{2}+48d-20) \sqrt {{d}^{5}-15{d}
^{2}+24d-10}\\
&-6{d}^{6}+4{d}^{5}+150{d}^{3}-384{d}^{2}+336d-
100
    \end{align*}
    is strictly positive at integer values of $d$ except $d=1$, so the system cannot be solved. Indeed, denoting 
    \begin{gather*}
        A(d):=2d^5-30d^2+48d-20\\
        B(d):=d^5-15d^2+24d-10\\
        C(d):=-6{d}^{6}+4{d}^{5}+150{d}^{3}-384{d}^{2}+336d-100
    \end{gather*}
    Then $\chi_X(d)-\chi_{CI}^{max}(d)>0$ iff 
    \begin{gather*}
        A(d)\sqrt{B(d)}>-C(d)
    \end{gather*}
    $C(d)$ is strictly negative for integer $d>1$, and $A,\sqrt{B(d)}$ are strictly positive, so we can square both sides,
    \[
    A(d)^2B(d)>C(d)^2
    \]
    This is now a polynomial inequality which can be verified directly. 
\end{example}

\newpage
\appendix
\section{Appendix}

To complete the proof of Theorem (\ref{SSM of A02}), it must be shown that 

\begin{lemma}\label{last-step}
\begin{equation}
h_N(\alpha,\beta_1,\dots,\beta_\ell) = \left[\sum_{k=0}^NS_{(N-k)}{\ell+N-1\choose k}(-\alpha)^k\right]
\end{equation}
where
\[
S_{(m)} = \begin{vmatrix}c_1 & c_2 & \dots&c_{m}\\ 1 & c_1 & \dots&c_{m-1}\\0&1&\dots\\ & \vdots & \\
&&1&c_1\end{vmatrix}.
\]
\end{lemma}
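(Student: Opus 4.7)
My plan is to prove this identity by generating-function manipulation, taking advantage of the fact that $S_{(m)}$ is the $m$-th coefficient of $1/C(-t)$ whenever $c_i$ are the coefficients of $C(t) = 1 + c_1 t + c_2 t^2 + \cdots$. This is just the standard determinantal identity expressing complete homogeneous symmetric polynomials in terms of elementary ones: if $C(t) = \prod(1+x_i t)$, then $\sum_m S_{(m)} t^m = \prod 1/(1-x_i t) = \sum h_m t^m$. I would begin the proof by applying this observation to the prototype $p_3$, where the total Chern series is
\[
C(t) \;=\; \frac{(1+2\alpha t)\prod_{i=1}^{\ell}(1+(\alpha+\beta_i)t)}{1+\alpha t},
\]
so that
\[
G(t) \;:=\; \frac{1}{C(-t)} \;=\; \frac{1-\alpha t}{(1-2\alpha t)\prod_{i=1}^{\ell}(1-(\alpha+\beta_i)t)} \;=\; \sum_{m\geq 0} S_{(m)}\, t^{m}.
\]

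Next I would repackage the right-hand side of the lemma as a single generating function in $t$. Summing over $N$ and setting $m = N-k$, the double sum becomes
\[
\sum_{N\geq 0}t^{N}\sum_{k=0}^{N}S_{(N-k)}\binom{\ell+N-1}{k}(-\alpha)^{k}\;=\;\sum_{m\geq 0}S_{(m)}\,t^{m}\sum_{k\geq 0}\binom{\ell+m+k-1}{k}(-\alpha t)^{k}.
\]
Applying the identity $\sum_{k\geq 0}\binom{\ell+m+k-1}{k}x^{k}=(1-x)^{-(\ell+m)}$, this collapses to
\[
\frac{1}{(1+\alpha t)^{\ell}}\sum_{m\geq 0}S_{(m)}\left(\frac{t}{1+\alpha t}\right)^{m}\;=\;\frac{1}{(1+\alpha t)^{\ell}}\,G\!\left(\frac{t}{1+\alpha t}\right).
\]

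The central step is then to evaluate $G$ at $s=t/(1+\alpha t)$. Direct algebra gives the three identities
\[
1-\alpha s=\frac{1}{1+\alpha t},\qquad 1-2\alpha s=\frac{1-\alpha t}{1+\alpha t},\qquad 1-(\alpha+\beta_{i})s=\frac{1-\beta_{i}t}{1+\alpha t},
\]
from which $G(t/(1+\alpha t)) = (1+\alpha t)^{\ell}/\bigl[(1-\alpha t)\prod(1-\beta_{i}t)\bigr]$. The factor $(1+\alpha t)^{\ell}$ cancels against the prefactor, and what remains is
\[
\frac{1}{(1-\alpha t)\prod_{i=1}^{\ell}(1-\beta_{i}t)}\;=\;\sum_{N\geq 0}h_{N}(\alpha,\beta_{1},\dots,\beta_{\ell})\,t^{N},
\]
and equating coefficients of $t^{N}$ finishes the proof.

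The only nonobvious move is the rational substitution $s=t/(1+\alpha t)$; everything else is routine. I expect that spotting this substitution is the main obstacle: it is forced by the desire to convert the binomial sum into a geometric-series expansion of $1/(1+\alpha t)$, and it happens (perhaps surprisingly) to rationalize all three factors of $G$ simultaneously, which is exactly what makes the identity work.
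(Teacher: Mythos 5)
Your proof is correct, and it takes a genuinely different and considerably shorter route than the paper's. The generating-function identity $\sum_m S_{(m)}t^m = 1/C(-t)$ follows from the cofactor recursion $S_{(m)}=\sum_{i\geq 1}(-1)^{i+1}c_iS_{(m-i)}$ (which the paper also derives), and every subsequent step checks out: the reindexing $m=N-k$ is a legitimate rearrangement of formal power series, the negative-binomial expansion is applied correctly, and the substitution $s=t/(1+\alpha t)$ is a valid composition of formal power series (zero constant term) that does rationalize all three factors of $G$ exactly as you claim, leaving $\bigl[(1-\alpha t)\prod_i(1-\beta_i t)\bigr]^{-1}=\sum_N h_N(\alpha,\beta_1,\dots,\beta_\ell)t^N$. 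The paper instead proceeds in three inductive stages: it first proves the closed form $S_{(m)}=h_m+\sum_{i=1}^m 2^{i-1}\alpha^i h_{m-i}$ (arguments $\alpha+\beta_1,\dots,\alpha+\beta_\ell$) by strong induction, with a separate inductive sub-lemma showing a large alternating sum vanishes; it then converts $h_k(\alpha+\beta_1,\dots,\alpha+\beta_\ell)$ into the $h_j(\beta_1,\dots,\beta_\ell)$ via a binomial lemma; and it finally compares coefficients of $\alpha^K h_{N-K}$ using a new Pascal-triangle identity (an alternating hockey-stick sum equals a $2$-weighted alternating row sum), again proven by induction. Your single computation absorbs all three steps; in fact the paper's closed form for $S_{(m)}$ drops out of your $G(t)$ by expanding $(1-\alpha t)/(1-2\alpha t)=1+\sum_{i\geq1}2^{i-1}\alpha^i t^i$, and the Pascal-triangle identity becomes a corollary rather than an input. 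What the paper's route buys is a set of explicit intermediate identities of independent combinatorial interest; what yours buys is brevity and a conceptual explanation of why the identity holds (the M\"obius substitution simultaneously rationalizes the prototype's Chern series). One presentational point: state explicitly at the outset that the $c_i$ feeding into $S_{(m)}$ are the quotient Chern classes of the $A_1$ prototype $p_3$, since the lemma as written leaves this implicit.
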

The difficulty in proving this lemma is that on the LHS we have a symmetric polynomial in the arguments $\alpha,\beta_1,\dots,\beta_\ell$, while on the RHS, in $S_{(N-k)}$, we have symmetric polynomials in the arguments $\alpha+\beta_1,\dots,\alpha+\beta_\ell$, so there is no straightforward relation between the two. Further, the actual dependence of the RHS on $\alpha$ and $\beta$'s is buried deeply under several definitions. Unraveling these two features is the key to showing the desired identity. Note this matrix is a Toeplitz matrix, i.e. all diagonals are constant, though we do not employ any of the theory of such.

\begin{proof}
We obtained an expression for $c_k$ in the course of the proof of Theorem (\ref{SSM of A02})
\[
c_k=e_k(\alpha+\beta_1,\dots,\alpha+\beta_\ell) + (-1)^{k+1}\alpha^k+\sum_{a=1}^{k-1 } e_a(\alpha+\beta_1,\dots,\alpha+\beta_\ell)\cdot (-1)^{-(k-a)+1}\alpha^{k-a} 
\]
which, when the subscript exceeds $\ell$, reduces to 
\[
c_{\ell+k} = (-1)^{k+1}\alpha^k\beta_1\cdots\beta_\ell
\]
for $k>0$. We aim to reduce the RHS of (\ref{last-step}) to LHS, so we begin by unraveling $S_{(m)}$ in terms of $\alpha$'s and $\beta$'s:

\begin{lemma}\label{Sm determinant}
\[
S_{(m)}=\left[\sum_{i=1}^{m}2^{i-1}\cdot \alpha^i\cdot h_{m-i}(\alpha+\beta_1,\dots,\alpha+\beta_\ell)\right] + h_{m}(\alpha+\beta_1,\dots,\alpha+\beta_\ell).
\]
\end{lemma}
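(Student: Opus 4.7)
My plan is to prove this entirely by a generating function argument, turning both sides into rational functions in a formal variable $t$ and showing they are equal.

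Set $S(t) := \sum_{m \geq 0} S_{(m)} t^m$ (with $S_{(0)} := 1$) and $C(t) := \sum_{k \geq 0} c_k t^k$. The first step is to observe that Laplace expansion of the Toeplitz determinant along its last column, or equivalently induction on $m$, yields the recursion
\[
S_{(m)} = c_1 S_{(m-1)} - c_2 S_{(m-2)} + \cdots + (-1)^{m-1} c_m
\]
for $m \geq 1$. This recursion is exactly the statement $C(-t)\cdot S(t) = 1$, i.e.\ $S(t) = 1/C(-t)$ (the standard Segre–Chern inversion).

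Next, I rewrite $C(t)$ explicitly using the formula for $c(p_3)$ established in the proof of Theorem \ref{SSM of A02}. Cancelling the common factor $\prod_i(1+\beta_i t)$ between numerator and denominator,
\[
C(t) = \frac{(1+2\alpha t)\prod_{i=1}^\ell (1+(\alpha+\beta_i)t)}{1+\alpha t},
\]
so that
\[
S(t) \;=\; \frac{1}{C(-t)} \;=\; \frac{1-\alpha t}{(1-2\alpha t)\prod_{i=1}^\ell (1-(\alpha+\beta_i)t)}.
\]

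Finally, I expand each factor as a power series in $t$. The classical generating function for the complete homogeneous symmetric polynomials gives
\[
\prod_{i=1}^\ell \frac{1}{1-(\alpha+\beta_i)t} \;=\; \sum_{j \geq 0} h_j(\alpha+\beta_1, \ldots, \alpha+\beta_\ell)\, t^j,
\]
while a direct geometric series calculation produces
\[
\frac{1-\alpha t}{1-2\alpha t} \;=\; 1 + \sum_{i \geq 1} 2^{i-1}\alpha^i\, t^i.
\]
Multiplying these two expansions and extracting the coefficient of $t^m$ produces exactly the right-hand side of the lemma.

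The steps are individually routine; the conceptual core — and thus the main place to take care — is the identification of the scalar factor $(1+2\alpha t)/(1+\alpha t)$ inside $C(t)$ as the source of the $2^{i-1}$ coefficients on the right-hand side. This factor is the contribution of the distinguished weight $2\alpha$ appearing in the target representation of the $A_1$ prototype but absent from the source, and it is responsible for the asymmetry between $\alpha$ and the $\beta_i$ in the final formula. Once this expansion is isolated, the rest of the argument is a mechanical symmetric-function identity.
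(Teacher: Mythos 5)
Your proof is correct, and it takes a genuinely different — and considerably shorter — route than the paper's. Both arguments start from the same cofactor-expansion recursion $S_{(m)}=\sum_{i=1}^m(-1)^{i+1}c_iS_{(m-i)}$ (with $S_{(0)}=1$), but from there the paper substitutes the explicit expansion of $c_i(p_3)$ in terms of $e_a(\alpha+\beta_1,\dots,\alpha+\beta_\ell)$ and powers of $\alpha$ and runs a strong induction whose core is an auxiliary vanishing identity, Lemma~(\ref{sum=0}), itself proved by a second, rather delicate induction with reindexed geometric sums. You instead encode the recursion once and for all as $S(t)\,C(-t)=1$, use the homogeneity of $c_k$ to factor
\[
C(t)=\frac{(1+2\alpha t)\prod_{i=1}^\ell\bigl(1+(\alpha+\beta_i)t\bigr)}{1+\alpha t},
\]
and read off $[t^m]$ of $\frac{1-\alpha t}{1-2\alpha t}\cdot\prod_{i=1}^\ell\bigl(1-(\alpha+\beta_i)t\bigr)^{-1}$ via two standard one-line expansions. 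I checked the pieces: the recursion is exactly the paper's, the translation to $S(t)=1/C(-t)$ is the correct signed Segre--Chern inversion for this Toeplitz convention, the cancellation of $\prod_i(1+\beta_i t)$ matches the paper's formula for $c(p_3)$, and $\frac{1-\alpha t}{1-2\alpha t}=1+\sum_{i\ge1}2^{i-1}\alpha^i t^i$ is right. Your approach collapses the paper's two nested inductions into a single multiplicative identity and, as you note, isolates the conceptual origin of the $2^{i-1}$ weights in the unmatched weight $2\alpha$ of the target representation; the only thing the paper's longer route buys is that it stays entirely at the level of polynomial identities in the abstract symbols $e_a,h_j$ (which is what lets it reuse the vanishing lemma with shifted variables), whereas your argument needs the specific factorization of $C(t)$ — but that factorization is exactly what is available here, so nothing is lost.
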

\begin{proof}
Using the cofactor expansion of determinant once using the first row, we obtain 
\begin{gather*}
S_{(m)}(c_1,\dots,c_m) \equiv \begin{vmatrix}c_1 & c_2 & c_3&\dots&c_{m-1} & c_{m}\\ 1 & c_1 &c_2& \dots&c_{m-2} & c_{m-1}\\0&1&c_1&\dots\\ & \vdots & \\
&&&&1&c_1\end{vmatrix}\\
=c_1\begin{vmatrix}c_1 & c_2 &c_3& \dots&c_{m-2} & c_{m-1}\\ 1 & c_1 & c_2&\dots&c_{m-3} & c_{m-2}\\0&1&c_1&\dots\\ & \vdots & \\
&&&&1&c_1\end{vmatrix}-c_2\begin{vmatrix}1 & c_2 & c_3 &\dots&c_{m-2} & c_{m-1}\\ 0& c_1 & c_2 & \dots&c_{m-2} & c_{m-3}\\0&1&c_1&\dots\\ & \vdots & \\
&&&&1&c_1\end{vmatrix} \\
+ \dots + (-1)^{m+1}c_m\begin{vmatrix}1 & c_1 & c_2 &\dots&c_{m-3} & c_{m-2}\\ 0& 1 & c_1 & \dots&c_{m-4} & c_{m-3}\\0&0&1&\dots\\ & \vdots & \\
&&&&0&1\end{vmatrix}.
\end{gather*}
The first term is exactly $c_1S_{(m-1)}$. The second may be computed by again performing a cofactor expansion, this time on the first column, since there is only one non-zero entry. The second term then becomes $-c_2\cdot S_{(m-2)}$. The $n$th term in the sum yields $(-1)^{n+1}c_nS_{(m-n)}$ after performing $n-1$ cofactor expansions. We obtain
$$
S_{(m)} = \sum_{i=1}^m(-1)^{i+1} c_i S_{(m-i)}.
$$
By strong induction we may assume 
$$
S_{(m-i)} = \left[\sum_{k=1}^{(m-i)}2^{k-1}\cdot \alpha^k \cdot h_{(m-i)-k}(\alpha+\beta_1,\dots,\alpha+\beta_\ell)\right]+h_{(m-i)}(\alpha+\beta_1,\dots,\alpha+\beta_\ell)
$$
for all $i \in \{1,2,\dots,m\}$, thus (For now, all symmetric polynomials have arguments $\alpha+\beta_1,\dots,\alpha+\beta_\ell$, so we abbreviate to simply $e_{(-)}$ and $h_{(-)}$)
\begin{gather*}
S_{(m)} = \sum_{i=1}^m(-1)^{i+1}c_i\left( \left[\sum_{k=1}^{(m-i)}2^{k-1}\cdot \alpha^k \cdot h_{(m-i)-k}\right]+h_{(m-i)}\right)\\
= \sum_{i=1}^m(-1)^{i+1}\left(e_i+(-1)^{i+1}\cdot \alpha^i + \sum_{a=1}^{i-1}e_a\cdot (-1)^{-(i-a)+1}\alpha^{i-a}\right)\cdot\left( \left[\sum_{k=1}^{(m-i)}2^{k-1}\cdot \alpha^k \cdot h_{(m-i)-k}\right]+h_{(m-i)}\right)\\\\
=\sum_{i=1}^m(-1)^{i+1}\Bigg(e_i \cdot \sum_{k=1}^{m-i}2^{k-1}\alpha^kh_{m-i-k} + e_i h_{m-i} + (-1)^{i+1}\alpha^i\cdot \sum_{k=1}^{m-i}2^{k-1}\alpha^kh_{m-i-k} + (-1)^{i+1}\alpha^i h_{m-i} \\
+ \sum_{a=1}^{i-1}e_a (-1)^{-(i-a)+1}\alpha^{i-a}\cdot \sum_{k=1}^{m-i}2^{k-1}\alpha^kh_{m-i-k} + \sum_{a=1}^{i-1}e_a(-1)^{-(i-a)+1}a^{i-a}h_{m-i}\Bigg).
\end{gather*}
Pulling the second term out of the parenthesis, then out of the sum, we get
\begin{gather*}
= \sum_{i=1}^m(-1)^{i+1} e_ih_{m-i} + \sum_{i=1}^m(-1)^{i+1}\Bigg(e_i \cdot \sum_{k=1}^{m-i}2^{k-1}\alpha^kh_{m-i-k} + (-1)^{i+1}\alpha^i\cdot \sum_{k=1}^{m-i}2^{k-1}\alpha^kh_{m-i-k} + (-1)^{i+1}\alpha^i h_{m-i} \\
+ \sum_{a=1}^{i-1}e_a (-1)^{-(i-a)+1}\alpha^{i-a}\cdot \sum_{k=1}^{m-i}2^{k-1}\alpha^kh_{m-i-k} + \sum_{a=1}^{i-1}e_a(-1)^{-(i-a)+1}\alpha^{i-a}h_{m-i}\Bigg).
\end{gather*}
The identity 
$$
\sum_{i=0}^m(-1)^ie_i h_{m-i}=0
$$
implies that the first sum above may be replaced with $h_m$, since the sum is missing the $i=0$ term:
\begin{gather*}
= h_m+ \sum_{i=1}^m(-1)^{i+1}\Bigg(e_i \cdot \sum_{k=1}^{m-i}2^{k-1}\alpha^kh_{m-i-k} + (-1)^{i+1}\alpha^i\cdot \sum_{k=1}^{m-i}2^{k-1}\alpha^kh_{m-i-k} + (-1)^{i+1}\alpha^i h_{m-i} \\
+ \sum_{a=1}^{i-1}e_a (-1)^{-(i-a)+1}\alpha^{i-a}\cdot \sum_{k=1}^{m-i}2^{k-1}\alpha^kh_{m-i-k} + \sum_{a=1}^{i-1}e_a(-1)^{-(i-a)+1}\alpha^{i-a}h_{m-i}\Bigg).
\end{gather*}
Splitting the sums into terms involving $e$'s and terms not involving $e$'s, we obtain
\begin{gather*}
= h_m+ \underline{\sum_{i=1}^m(-1)^{i+1}\Bigg(e_i \cdot \sum_{k=1}^{m-i}2^{k-1}\alpha^kh_{m-i-k} + \sum_{a=1}^{i-1}e_a (-1)^{-(i-a)+1}\alpha^{i-a}\cdot \sum_{k=1}^{m-i}2^{k-1}\alpha^kh_{m-i-k}} \\
\underline{+h_{m-i}\sum_{a=1}^{i-1}e_a(-1)^{-(i-a)+1}\alpha^{i-a}\Bigg)}\\
+\sum_{i=1}^m\left(\left[\sum_{k=1}^{m-i}2^{k-1}\alpha^{k+i}h_{m-i-k}\right]+\alpha^ih_{m-i}\right)
\end{gather*}
Now we claim that the underlined portion is equal to 0:

\begin{lemma}\label{sum=0}
$$
\sum_{i=1}^m(-1)^{i+1}\Bigg(e_i \cdot \sum_{k=1}^{m-i}2^{k-1}\alpha^kh_{m-i-k} + \sum_{a=1}^{i-1}e_a (-1)^{-(i-a)+1}\alpha^{i-a}\cdot \sum_{k=1}^{m-i}2^{k-1}\alpha^kh_{m-i-k}
$$
$$
+h_{m-i}\sum_{a=1}^{i-1}e_a(-1)^{-(i-a)+1}\alpha^{i-a}\Bigg)=0
$$
\end{lemma}

\begin{proof}
We proceed by induction on $m$: Let $P(m)$ be the proposition that the above sum is equal to 0. Checking the base case is routine, but there is one thing to note: That the sum is equal to 0 can be obtained \textit{as a polynomial in the $\alpha$, $e$'s and $h$'s. In particular, we do not need to refer to any properties of the $e$'s and $h$'s: we do not even need to assume that $h_0=1$. This implies that if we rename all the variables, the identity will still hold. This will be relevant later.}

We must show that 
$$
\sum_{i=1}^{m+1}(-1)^{i+1}\Bigg(e_i \cdot \sum_{k=1}^{m+1-i}2^{k-1}\alpha^kh_{m+1-i-k} + \sum_{a=1}^{i-1}e_a (-1)^{-(i-a)+1}\alpha^{i-a}\cdot \sum_{k=1}^{m+1-i}2^{k-1}\alpha^kh_{m+1-i-k}
$$
$$
+h_{m+1-i}\sum_{a=1}^{i-1}e_a(-1)^{-(i-a)+1}\alpha^{i-a}\Bigg)=0.
$$
Separating out the $i=m+1$ term (many of the inner sums are thus 0 because they start at $k=1$ and $a=1$), we get
\begin{gather*}
=\underbrace{(-1)^m\cdot h_0 \sum_{a=1}^me_a\cdot(-1)^{a-m}\cdot \alpha^{m+1-a}}_{i=m+1}\\
+\sum_{i=1}^{m}(-1)^{i+1}\Bigg(e_i \cdot \sum_{k=1}^{m+1-i}2^{k-1}\alpha^kh_{m+1-i-k} + \sum_{a=1}^{i-1}e_a (-1)^{-(i-a)+1}\alpha^{i-a}\cdot \sum_{k=1}^{m+1-i}2^{k-1}\alpha^kh_{m+1-i-k}\\
+h_{m+1-i}\sum_{a=1}^{i-1}e_a(-1)^{-(i-a)+1}\alpha^{i-a}\Bigg).
\end{gather*}
Then we separate out the final term of the inner sums that involve $m+1$
\begin{gather*}
= h_0 \sum_{a=1}^me_a\cdot(-1)^a\cdot \alpha^{m+1-a}\\
+\sum_{i=1}^{m}(-1)^{i+1}\Bigg(e_i \cdot \left[\sum_{k=1}^{m-i}2^{k-1}\alpha^kh_{m+1-i-k} +\underbrace{2^{m-i}\alpha^{m+1-i}h_0}_{k=m+1-i}\right]\\
+ \sum_{a=1}^{i-1}e_a (-1)^{-(i-a)+1}\alpha^{i-a}\cdot \left[\sum_{k=1}^{m-i}2^{k-1}\alpha^kh_{m+1-i-k}+\underbrace{2^{m-i}\alpha^{m+1-i}h_0}_{k=m+1-i}\right]\\
+h_{m+1-i}\sum_{a=1}^{i-1}e_a(-1)^{-(i-a)+1}\alpha^{i-a}\Bigg).
\end{gather*}
Then pull these extra $k=m+1-i$ terms all the way out of the outermost sum, upon which they pick up a sum over $i$ and an alternating sign,
\begin{gather*}
=h_0\sum_{a=1}^me_a\cdot(-1)^a\cdot \alpha^{m+1-a}\\
+\sum_{i=1}^m(-1)^{i+1}\left(e_i\cdot 2^{m-i}\alpha^{m+1-i}h_0 + \sum_{a=1}^{i-1}e_a(-1)^{-(i-a)+1}\alpha^{i-a}\cdot(2^{m-i}\alpha^{m+1-i}h_0)\right)\\
+\sum_{i=1}^{m}(-1)^{i+1}\Bigg(e_i \cdot \left[\sum_{k=1}^{m-i}2^{k-1}\alpha^kh_{m+1-i-k}\right]\\
+ \sum_{a=1}^{i-1}e_a (-1)^{-(i-a)+1}\alpha^{i-a}\cdot \left[\sum_{k=1}^{m-i}2^{k-1}\alpha^kh_{m+1-i-k}\right]\\
+h_{m+1-i}\sum_{a=1}^{i-1}e_a(-1)^{-(i-a)+1}\alpha^{i-a}\Bigg).
\end{gather*}
We want to cancel the final 3 lines in the expression above using the inductive hypothesis. However the sum in $P(m)$ involves variables $h_0,\dots,h_{m-i-1}$, while the expression above has variables $h_1,\dots,h_{m-i}$, due to the $+1$ in all $h$ subscripts. As argued before, $P(m)$ is true for any indeterminate variables, so we can still apply the inductive hypothesis to conclude the sum of the final 3 lines are 0,
\begin{gather*}
=h_0 \sum_{a=1}^me_a\cdot(-1)^a\cdot \alpha^{m+1-a}\\
+\sum_{i=1}^m(-1)^{i+1}2^{m-i}\alpha^{m+1-i}h_0\left(e_i+\sum_{a=1}^{i-1}e_a(-1)^{-(i-a)+1}\alpha^{i-a}\right)\\\\
=h_0 \sum_{a=1}^me_a\cdot(-1)^a\cdot \alpha^{m+1-a}\\
+\sum_{i=1}^m(-1)^{i+1}2^{m-i}\alpha^{m+1-i}h_0e_i\\
+\sum_{i=1}^m(-1)^{i+1}2^{m-i}\alpha^{m+1-i}h_0\sum_{a=1}^{i-1}e_a(-1)^{-(i-a)+1}\alpha^{i-a}.
\end{gather*}
Every term has a factor of $h_0$. Since we seek to show that this is equal to 0, we can factor and ignore the $h_0$ (note again that we do not need to impose that $h_0=1)$
\begin{gather*}
=\sum_{a=1}^me_a\cdot(-1)^a\cdot \alpha^{m+1-a}+\sum_{i=1}^m(-1)^{i+1}2^{m-i}\alpha^{m+1-i}e_i+\sum_{i=1}^m\sum_{a=1}^{i-1}e_a(-1)^a2^{m-i}\alpha^{m+1-a}.
\end{gather*}
In the third term, we swap the order of summation
\begin{gather*}
= \sum_{a=1}^me_a\cdot(-1)^a\cdot \alpha^{m+1-a}+\sum_{i=1}^m(-1)^{i+1}2^{m-i}\alpha^{m+1-i}e_i+\sum_{a=1}^{m-1}\sum_{i=a+1}^{m}e_a(-1)^a2^{m-i}\alpha^{m+1-a}\\\\
=\sum_{a=1}^me_a\cdot(-1)^a\cdot \alpha^{m+1-a} +\sum_{i=1}^m(-1)^{i+1}2^{m-i}\alpha^{m+1-i}e_i +\sum_{a=1}^{m-1}e_a(-1)^a \alpha^{m+1-a}\sum_{i=a+1}^{m}2^{m-i}.
\end{gather*}
The inner sum over $i$ in the third term is now a geometric series with value $2^{m-a}-1$,
\begin{gather*}
= \sum_{a=1}^me_a\cdot(-1)^a\cdot \alpha^{m+1-a}+\sum_{i=1}^m(-1)^{i+1}2^{m-i}\alpha^{m+1-i}e_i+\sum_{a=1}^{m-1}e_a(-1)^a \alpha^{m+1-a}\cdot\big(2^{m-a}-1\big)
\end{gather*}
\begin{gather*}
= \sum_{a=1}^me_a\cdot(-1)^a\cdot \alpha^{m+1-a}+\sum_{i=1}^m(-1)^{i+1}2^{m-i}\alpha^{m+1-i}e_i\\
+\sum_{a=1}^{m-1}e_a(-1)^a \alpha^{m+1-a}\cdot 2^{m-a}-\sum_{a=1}^{m-1}e_a(-1)^a \alpha^{m+1-a}.
\end{gather*}
The second and third sums are identical with opposite signs except the second sum has an extra $m$ term, so the overall contribution of sum two + sum three is just the $i=m$ term of sum two:
\begin{gather*}
= \sum_{a=1}^me_a\cdot(-1)^a\cdot \alpha^{m+1-a}+(-1)^{m+1}2^0\alpha^{1}e_m -\sum_{a=1}^{m-1}e_{a}(-1)^a \alpha^{m+1-a}.
\end{gather*}
The term that just appeared is the negative of the $a=m$ term of (the new) sum three. Since the sum comes with a - sign, it can then be absorbed 
\begin{gather*}
= \sum_{a=1}^me_{a}\cdot(-1)^a\cdot \alpha^{m+1-a} -\sum_{a=1}^{m}e_{a}(-1)^a \alpha^{m+1-a} \equiv 0.
\end{gather*}
\end{proof}
Returning to the proof of Lemma (\ref{Sm determinant}), we can write 
\begin{gather*}
S_{(m)}=h_m +\cancel{(-)} + \sum_{i=1}^m\left(\left[\sum_{k=1}^{m-i}2^{k-1}\alpha^{k+i}h_{m-i-k}\right]+\alpha^ih_{m-i}\right)\\
=h_m +\sum_{i=1}^m\sum_{k=1}^{m-i}2^{k-1}\alpha^{k+i}h_{m-i-k}+\sum_{i=1}^m\alpha^ih_{m-i}
\end{gather*}
where $\cancel{(-)}$ is the term which is shown to be equal to 0 in Lemma (\ref{sum=0}). If we reindex $j:=k+i$, 
\begin{gather*}
=h_m + \left(\sum_{j=2}^m\sum_{i=1}^{j-1}2^{j-i-1}\alpha^jh_{m-j}\right)+\sum_{i=1}^m\alpha^ih_{m-i}\\
=h_m + \left(\sum_{j=2}^m\alpha^jh_{m-j}\sum_{i=1}^{j-1}2^{j-i-1}\right)+\sum_{i=1}^m\alpha^ih_{m-i}.
\end{gather*}
The inner sum over $i$ is now a geometric series with value $2^{j-1}-1$,
\begin{gather*}
=h_m + \left(\sum_{j=2}^m\alpha^jh_{m-j}(2^{j-1}-1)\right)+\sum_{i=1}^m\alpha^ih_{m-i}\\
=h_m + \sum_{j=2}^m2^{j-1}\alpha^jh_{m-j}-\sum_{j=2}^m\alpha^jh_{m-j}+\sum_{i=1}^m\alpha^ih_{m-i}\\
=h_m + \sum_{j=2}^m2^{j-1}\alpha^jh_{m-j}+\alpha^1h_{m-1}\\
=h_m + \sum_{j=1}^m2^{j-1}\alpha^jh_{m-j}
\end{gather*}
as required.
\end{proof}
Returning to the proof of Lemma (\ref{last-step}), we now need to show 
\begin{equation}\label{last-eqn}
\sum_{k=0}^N\left(\sum_{i=1}^{N-k}2^{i-1}\cdot \alpha^i\cdot h_{N-k-i}+h_{N-k}\right){\ell+N-1\choose k}(-1)^{k+1}\alpha^k = h_N(\alpha,\beta_1,\dots,\beta_\ell).
\end{equation}
Again any symmetric polynomials without arguments explicitly listed are understood to have arguments $\alpha+\beta_1,\dots,\alpha+\beta_\ell$, such as on the LHS of equation (\ref{last-eqn}). 
The following Lemma is a straightforward generalization of Lemma 3.5 of (\cite{thesis}), where they prove the case of $\alpha=1$ using the technology of the ``category of sequences increasing to $m$'':
\begin{lemma}\label{lemma-thesis}
\[
h_k(\alpha+\beta_1,\dots,\alpha+\beta_\ell) = \sum_{j=0}^k{\ell+k-1\choose k-j}\alpha^{k-j}h_j(\beta_1,\dots,\beta_\ell).
\]
\end{lemma}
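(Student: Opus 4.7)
The plan is to prove this identity via generating functions. Recall the standard formula
\[
H_y(t) := \sum_{k\geq 0} h_k(y_1,\dots,y_\ell)\, t^k \;=\; \prod_{i=1}^\ell \frac{1}{1-y_i t}.
\]
Setting $y_i = \alpha + \beta_i$, the crucial observation is the one-variable partial fraction identity
\[
\frac{1}{1-(\alpha+\beta_i)t} \;=\; \frac{1}{1-\alpha t}\cdot \frac{1}{1-\beta_i u}, \qquad u := \frac{t}{1-\alpha t},
\]
which linearizes the shift by $\alpha$ after the change of variable $t \mapsto u$.

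First, I would multiply this identity over $i=1,\dots,\ell$ to obtain
\[
H_{\alpha+\beta}(t) \;=\; \frac{1}{(1-\alpha t)^\ell}\cdot H_\beta(u) \;=\; \sum_{j\geq 0} h_j(\beta_1,\dots,\beta_\ell)\,\frac{t^j}{(1-\alpha t)^{\ell+j}}.
\]
Next I would expand each factor $(1-\alpha t)^{-(\ell+j)}$ by the negative binomial theorem as $\sum_{m\geq 0}\binom{\ell+j+m-1}{m}\alpha^m t^m$, and extract the coefficient of $t^k$ by setting $m = k-j$. The resulting binomial coefficient $\binom{\ell+j+(k-j)-1}{k-j} = \binom{\ell+k-1}{k-j}$ is independent of $j$, which is exactly the coefficient appearing on the right-hand side of the claimed identity. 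Reading off the coefficient of $t^k$ then yields
\[
h_k(\alpha+\beta_1,\dots,\alpha+\beta_\ell) \;=\; \sum_{j=0}^k \binom{\ell+k-1}{k-j}\alpha^{k-j}\, h_j(\beta_1,\dots,\beta_\ell),
\]
as desired.

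I do not anticipate a serious obstacle here: the whole proof is a short manipulation of rational generating functions, and the only non-routine step is spotting the substitution $u = t/(1-\alpha t)$ that decouples $\alpha$ from the $\beta_i$. An alternative, more pedestrian route would be induction on $\ell$ using the one-variable recursion $h_k(x_1,\dots,x_\ell) = \sum_{j=0}^k h_{k-j}(x_1,\dots,x_{\ell-1})\,x_\ell^{\,j}$, combined with the Vandermonde identity to collapse the nested binomial sums, but the generating function argument above is cleaner and treats all $\ell$ uniformly. Either approach avoids reliance on the notion of ``category of sequences increasing to $m$'' used in the $\alpha=1$ case of \cite{thesis}, and the result specializes to that one at $\alpha=1$.
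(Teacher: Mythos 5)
Your proof is correct, and it is genuinely different from (and more self-contained than) what the paper does: the paper does not actually prove Lemma \ref{lemma-thesis}, but instead cites Lemma 3.5 of \cite{thesis} for the special case $\alpha=1$ (proved there via the ``category of sequences increasing to $m$'') and asserts that the general case is a straightforward generalization. Your generating function argument is complete and checks out: the partial-fraction identity $\frac{1}{1-(\alpha+\beta_i)t}=\frac{1}{1-\alpha t}\cdot\frac{1}{1-\beta_i u}$ with $u=t/(1-\alpha t)$ is easily verified, the substitution is legitimate as a formal power series manipulation since $u$ has zero constant term, and extracting the coefficient of $t^k$ from $\sum_j h_j(\beta)\,t^j(1-\alpha t)^{-(\ell+j)}$ via the negative binomial expansion gives exactly $\binom{\ell+k-1}{k-j}$ because the upper index $\ell+j+(k-j)-1$ collapses to $\ell+k-1$ (note only the upper index is independent of $j$; your phrasing ``the binomial coefficient is independent of $j$'' is slightly loose, since the lower index $k-j$ still varies). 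What your approach buys is a uniform, two-line proof covering all $\alpha$ and all $\ell$ at once, with no reliance on an external combinatorial framework; what the paper's route buys is brevity at the cost of leaving the generalization step to the reader. Your argument could profitably replace the citation.
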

From which we obtain the corollary
\begin{corollary}\label{thesis-corollary}
\[
h_k(\beta_1,\dots,\beta_\ell) = \sum_{j=0}^k{\ell+k-1\choose k-j}(-1)^{k-j}\alpha^{k-j}h_j(\alpha+\beta_1,\dots,\alpha+\beta_\ell).
\]
\end{corollary}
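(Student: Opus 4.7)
The corollary follows as a one-line consequence of Lemma \ref{lemma-thesis} via variable substitution. Since Lemma \ref{lemma-thesis} is a polynomial identity valid for all values of the indeterminates $\alpha$ and $\beta_1, \ldots, \beta_\ell$, we are free to specialize those variables to any other polynomial expressions we like, and the identity will remain valid.

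The plan is to apply Lemma \ref{lemma-thesis} with $\alpha$ replaced by $-\alpha$ and each $\beta_i$ replaced by $\alpha + \beta_i$. On the left-hand side of the lemma, the arguments of $h_k$ then telescope to $(-\alpha) + (\alpha + \beta_i) = \beta_i$, producing $h_k(\beta_1, \ldots, \beta_\ell)$, which matches the left-hand side of the corollary. On the right-hand side, each factor $\alpha^{k-j}$ becomes $(-\alpha)^{k-j} = (-1)^{k-j}\alpha^{k-j}$, each $h_j(\beta_1, \ldots, \beta_\ell)$ becomes $h_j(\alpha + \beta_1, \ldots, \alpha + \beta_\ell)$, and the binomial coefficient $\binom{\ell+k-1}{k-j}$ is unaffected by the substitution. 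Assembling these pieces term by term recovers exactly the right-hand side claimed in the corollary.

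There is essentially no obstacle to this argument. The only point worth mentioning is that one must use Lemma \ref{lemma-thesis} as a genuine polynomial identity in $\alpha, \beta_1, \ldots, \beta_\ell$, rather than one conditioned on, say, $\alpha$ being nonzero or the $\beta_i$ being distinct; this is automatic, since both sides are polynomial of the same bidegree in $\alpha$ and the $\beta_i$. The substantive content of the corollary is thus entirely inherited from Lemma \ref{lemma-thesis}, and the substitution $\alpha \mapsto -\alpha$, $\beta_i \mapsto \alpha + \beta_i$ is engineered precisely to undo the shift by $\alpha$ that appears there. An alternative route would be to argue directly that the lower-triangular change-of-basis matrix with entries $\binom{\ell+k-1}{k-j}\alpha^{k-j}$ that appears in Lemma \ref{lemma-thesis} has inverse with entries $\binom{\ell+k-1}{k-j}(-\alpha)^{k-j}$, but the substitution approach above produces the same conclusion in a single step without requiring any explicit matrix inversion.
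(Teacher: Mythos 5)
Your proof is correct and is essentially the paper's argument: the paper performs the substitution in two steps ($\beta_i \mapsto \beta_i - \alpha$ followed by $\alpha \mapsto -\alpha$), which composes to exactly your simultaneous substitution $\alpha \mapsto -\alpha$, $\beta_i \mapsto \alpha + \beta_i$. Nothing further is needed.
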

\begin{proof}
Take the equation in the above proposition and first send $b_i\mapsto b_i-\alpha$ while leaving $\alpha$ unchanged, then send $\alpha\mapsto -\alpha$. 
\end{proof}
Now we can begin to justify (\ref{last-eqn}). The RHS may be rewritten using Corollary (\ref{thesis-corollary}),
\[
\text{RHS } (\ref{last-eqn}) = \sum_{k=0}^N\alpha^k \left(\sum_{i=0}^{N-k}  {\ell+(N-k)-1\choose (N-k)-i}(-1)^{(N-k)-i}\alpha^{(N-k)-i}h_i(\alpha+\beta_1,\dots,\alpha+\beta_\ell)      \right).
\]
On the inner sum, we reverse the order of summation, swapping $i \leftrightarrow (N-k)-i$,
$$
= \sum_{k=0}^N\alpha^k \left(\sum_{i=0}^{N-k}  {\ell+(N-k)-1\choose i}(-1)^i\alpha^ih_{N-k-i}      \right).
$$
We swap the order of the double sum,
$$
=\sum_{i=0}^N\sum_{k=0}^{N-i}\alpha^k{\ell+N-k-1\choose i}(-1)^i\alpha^ih_{N-k-i}
$$
$$
=\sum_{i=0}^N(-1)^i\alpha^i\sum_{k=0}^{N-i}{\ell+N-k-1\choose i}\alpha^kh_{N-k-i}
$$
$$
=\sum_{k=0}^N(-1)^k\alpha^k\sum_{i=0}^{N-k}{\ell+N-i-1\choose k}\alpha^ih_{N-k-i}.
$$
Next we would like to justify the equality
\begin{equation}\label{last-RHS}
\sum_{k=0}^N(-1)^k\alpha^k\sum_{i=0}^{N-k}{\ell+N-i-1\choose k}\alpha^ih_{N-k-i}
\end{equation}
\begin{equation}\label{last-LHS}
=\sum_{k=0}^N\left[{\ell+N-1\choose k}(-1)^{k+1}\alpha^k\left(\sum_{i=1}^{N-k}2^{i-1}\cdot \alpha^i\cdot h_{N-k-i}+h_{N-k}\right)\right].
\end{equation}
as these are the right and left hand sides of (\ref{last-eqn}), respectively. Surely there are more efficient ways to show (\ref{last-RHS})=(\ref{last-LHS}), but here is one way, which may also be of independent interest to combinatorialists.

The monomials appearing in both equations are of the form $\alpha^Kh_{N-K}$, so if we compute the coefficient of that monomial for each equation, and show they agree, then we will be done. Computing \textit{an} expression for the coefficient in each equation is not difficult. In equation (\ref{last-RHS}) the coefficient of $\alpha^Kh_{N-K}$ is 
$$
[a^Kh_{N-K}](\ref{last-RHS}) = \sum_{i=0}^K(-1)^i{\ell+N-1-K+i\choose i}. 
$$
In equation (\ref{last-LHS}), the coefficient of $\alpha^Kh_{N-K}$ is 
$$
[\alpha^Kh_{N-K}](\ref{last-LHS}) = \left(\sum_{i=0}^{K-1} 2^{K-1-i}(-1)^i{\ell+N-1\choose i}\right)+(-1)^{K}{\ell+N-1\choose K}.
$$
Finally we claim
\begin{theorem}
For $r \in \bb N$ and $c\in [r]$ (so that the pair $(r,c)$ represents a point on Pascal's triangle), we have 
$$
\sum_{i=0}^{c-1}{r-c+i\choose i}(-1)^i = \sum_{i=0}^{c-1}2^{c-1-i}(-1)^i{r\choose i}.
$$
Adding $(-1)^c{r\choose i}$ to both sides yields the equivalent formulation
$$
\sum_{i=0}^c{r-c+i\choose i}(-1)^i = \left[\sum_{i=0}^{c-1}2^{c-1-i}(-1)^i{r\choose i}\right] + (-1)^c {r\choose c}.
$$
\end{theorem}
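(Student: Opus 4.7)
The plan is to prove the identity by induction on $c$, showing that both sides satisfy the same first-order recurrence
\[
f(c,r) \;=\; f(c-1,\,r-1) \;+\; (-1)^{c-1}\binom{r-1}{c-1},
\]
with common initial value $f(1,r)=1$. The base case $c=1$ is immediate: the left sum is the single term $\binom{r-1}{0}=1$, and the right sum is $2^0(-1)^0\binom{r}{0}=1$.

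For the left-hand side $L(c,r):=\sum_{i=0}^{c-1}\binom{r-c+i}{i}(-1)^i$, the recurrence is essentially by inspection. Replacing $(c,r)$ by $(c-1,r-1)$ does not change the binomial coefficient (since $(r-1)-(c-1)=r-c$), but shortens the summation range to $0\le i\le c-2$. Hence $L(c,r)-L(c-1,r-1)$ is exactly the top term at $i=c-1$, which equals $(-1)^{c-1}\binom{r-1}{c-1}$.

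For the right-hand side $R(c,r):=\sum_{i=0}^{c-1}2^{c-1-i}(-1)^i\binom{r}{i}$, I would apply Pascal's rule $\binom{r}{i}=\binom{r-1}{i}+\binom{r-1}{i-1}$ and split the sum in two. The second piece, after the reindexing $j=i-1$ (the $i=0$ term vanishes), contributes $-R(c-1,r-1)$. The first piece, after separating the $i=c-1$ term and pulling out a factor of $2$ from the remaining $2^{c-1-i}=2\cdot 2^{c-2-i}$, contributes $2R(c-1,r-1)+(-1)^{c-1}\binom{r-1}{c-1}$. Adding the two pieces gives exactly $R(c-1,r-1)+(-1)^{c-1}\binom{r-1}{c-1}$, establishing the desired recurrence.

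The hard part is discovering the recurrence in the first place: the ``diagonal'' shift $(c,r)\mapsto(c-1,r-1)$ is suggested by the peelable top term of $L$, but there is no a priori reason for $R$ to respect the same shift given its geometric weights $2^{c-1-i}$. Verifying a few small cases (e.g.\ $c=2$ and $c=3$ for generic $r$) makes the pattern plausible, after which the Pascal expansion above does the job. Once the recurrence is in hand, induction on $c$ concludes the proof, and the equivalent reformulation obtained by adding $(-1)^c\binom{r}{c}$ to both sides follows immediately.
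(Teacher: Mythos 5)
Your proposal is correct and follows essentially the same route as the paper: the paper fixes the diagonal $c=r-k$ and inducts on $r$, which is exactly your shift $(c,r)\mapsto(c-1,r-1)$, and its induction step is the same Pascal's-rule expansion, reindexing, and combination $2\cdot 2^{c-2-i}-2^{c-2-i}=2^{c-2-i}$ that you carry out for $R(c,r)$. Your reformulation as a common first-order recurrence for both sides is a slightly cleaner packaging of the identical argument.
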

In words: the alternating diagonal sum approaching $(r,c)$ is equal to a weighted, alternating row sum approaching $(r,c)$. If we omit the alternating sign, the positive diagonal sum's answer is a classical combinatorics result known as the \textit{hockey stick identity}.

Note that, upon proving this theorem, we may apply it to justify the equation $(\ref{last-RHS})=(\ref{last-LHS})$ by setting $r=\ell+N-1$ and $c=K$ in the second formulation, so this proof is the final step. First, an example.
\begin{example} Choosing $r=7,c=4$, 
$$
\begin{array}{ccccccccccccccccc}
& & & & & & & & \binom{0}{0} & & & & & & & \\ 
& & & & & & & \binom{1}{0} & & \binom{1}{1} & & & & & & \\ 
& & & & & & \binom{2}{0} & & \binom{2}{1} & & \binom{2}{2} & & & & & \\ 
& & & & & \underline{\binom{3}{0}} & & \binom{3}{1} & & \binom{3}{2} & & \binom{3}{3} & & & & \\ 
& & & & \binom{4}{0} & & \underline{\binom{4}{1}} & & \binom{4}{2} & & \binom{4}{3} & & \binom{4}{4} & & & \\ 
 && & \binom{5}{0} & & \binom{5}{1} & & \underline{\binom{5}{2}} & & \binom{5}{3} & & \binom{5}{4} & & \binom{5}{5} & \\ 
 && \binom{6}{0} & & \binom{6}{1} & & \binom{6}{2} & & \underline{\binom{6}{3}} & & \binom{6}{4} & & \binom{6}{5} & & \binom{6}{6} \\ 
&\underline{\binom{7}{0}} & & \underline{\binom{7}{1}} & & \underline{\binom{7}{2}} & & \underline{\binom{7}{3}} & & \boxed{\binom{7}{4}} & & \binom{7}{5} & & \binom{7}{6} & & \binom{7}{7} 
\end{array}
$$
the alternating hockey stick sum (diagonal line towards ${7\choose 4}$ from the left) is 
$$
{3\choose 0}-{4\choose 1}+{5\choose 2}-{6\choose 3} = -13
$$
and the weighted alternating row sum (left of row containing ${7\choose 4}$) is 
$$
8{7\choose 0}-4{7\choose 1}+2{7\choose 2}-1{7\choose 3} = -13.
$$
Note that this is not a term-by-term equality. 
\end{example}
\begin{proof}
We will prove the first formulation and proceed by choosing an arbitrary diagonal line of the form $L_k:={r\choose r-k}$. If we prove the theorem for every $r\in \bb N$ in the line $L_k$, for arbitrary $k$, the theorem is proven. We fix arbitrary $k$ for the rest of the proof and induct on $r$: Let $P(r)$ be the proposition that the equality above is true when approaching the point ${r\choose r-k}$, i.e. setting $c=r-k$:
$$
P(r) = ``\sum_{i=0}^{r-k-1}{r-(r-k)+i\choose i}(-1)^i = \sum_{i=0}^{(r-k)-1}2^{(r-k)-1-i}(-1)^i{r\choose i}''
$$
$$
=``\sum_{i=0}^{r-k-1}{k+i\choose i}(-1)^i = \sum_{i=0}^{r-k-1}2^{r-k-1-i}(-1)^i{r\choose i}''.
$$
(This is the same as the original formulation of the theorem in our $r,k$ coordinates.) Assume $P(r-1)$ is true, so that 
$$
\sum_{i=0}^{r-k-2}{k+i\choose i}(-1)^i = \sum_{i=0}^{r-k-2}2^{r-k-2-i}(-1)^i{r-1\choose i}
$$
Then we must show $P(r)$ (already written above) is true. We start with the RHS of $P(r)$
\begin{gather*}
\sum_{i=0}^{r-k-1}2^{r-k-1-i}(-1)^i{r\choose i}.
\end{gather*}
Applying Pascal's relation, we obtain
\begin{gather*}
=\sum_{i=0}^{r-k-1}2^{r-k-1-i}(-1)^i\left[  {r-1\choose i} + {r-1\choose i-1}\right]\\
=\sum_{i=0}^{r-k-1}2^{r-k-1-i}(-1)^i{r-1\choose i} + \sum_{i=0}^{r-k-1}2^{r-k-1-i}(-1)^i{r-1\choose i-1}.
\end{gather*}
In the second term we send $i\mapsto i+1$,
\begin{gather*}
=\sum_{i=0}^{r-k-1}2^{r-k-1-i}(-1)^i{r-1\choose i} + \sum_{i=-1}^{r-k-2}2^{r-k-2-i}(-1)^{i+1}{r-1\choose i}.
\end{gather*}
In the second sum the $i=-1$ term is 0 so we may remove it, and separate out the $i=r-k-1$ term from the first sum 
\begin{gather*}
=\left[\sum_{i=0}^{r-k-2}2^{r-k-1-i}(-1)^i{r-1\choose i}+(-1)^{r-k-1}{r-1\choose r-k-1}\right] + \sum_{i=0}^{r-k-2}2^{r-k-2-i}(-1)^{i+1}{r-1\choose i}\\
=\left[\sum_{i=0}^{r-k-2}2^{r-k-1-i}(-1)^i{r-1\choose i}+ \sum_{i=0}^{r-k-2}2^{r-k-2-i}(-1)^{i+1}{r-1\choose i}\right] + (-1)^{r-k-1}{r-1\choose r-k-1}\\
=\left[\sum_{i=0}^{r-k-2}{r-1\choose i}(-1)^i\big(2^{r-k-1-i}-2^{r-k-2-i}\big)\right] + (-1)^{r-k-1}{r-1\choose r-k-1}\\
=\left[\sum_{i=0}^{r-k-2}{r-1\choose i}(-1)^i2^{r-k-2-i}\right]+(-1)^{r-k-1}{r-1\choose r-k-1}.
\end{gather*}
By inductive hypothesis we can replace the sum,
\begin{gather*}
=\sum_{i=0}^{r-k-2}{k+i\choose i}(-1)^i + (-1)^{r-k-1}{r-1\choose r-k-1}\\
=\sum_{i=0}^{r-k-1}{k+i\choose i}(-1)^i.
\end{gather*}
As required. 
\end{proof}
This completes the proof. 
\end{proof}

\printbibliography

@phdthesis{thesis,
  edition = {},
  number = {},
  journal = {},
  booktitle = {},
  pages = {},
  publisher = {University of Oxford},
  school = {University of Oxford},
  title = {The norm of a canonical isomorphism of determinant line bundles},
  volume = {},
  author = {Gomezllata Marmolejo, E},
  editor = {},
  year = {2022},
  series = {}
}

@misc{ohmoverview,
      title={Thom polynomials for singularities of maps}, 
      author={Toru Ohmoto},
      year={2025},
      eprint={2502.16727},
      archivePrefix={arXiv},
      primaryClass={math.AG},
      url={https://arxiv.org/abs/2502.16727}, 
}

@Article{Ronga1980,
author={Ronga, Felice},
title={On multiple points of smooth immersions},
journal={Commentarii Mathematici Helvetici},
year={1980},
month={Dec},
day={01},
volume={55},
number={1},
pages={521-527},
issn={1420-8946},
doi={10.1007/BF02566703},
url={https://doi.org/10.1007/BF02566703}
}

@book{FultonIT, title={Intersection theory}, publisher={Springer Berlin Heidelberg}, author={Fulton, William}, year={1984}}

@incollection {fulton-laksov,
    AUTHOR = {Fulton, William and Laksov, Dan},
     TITLE = {Residual intersections and the double point formula},
 BOOKTITLE = {Real and complex singularities ({P}roc. {N}inth {N}ordic
              {S}ummer {S}chool/{NAVF} {S}ympos. {M}ath., {O}slo, 1976)},
     PAGES = {171--177},
 PUBLISHER = {Sijthoff \& Noordhoff, Alphen aan den Rijn},
      YEAR = {1977},
      ISBN = {90-286-0097-3},
   MRCLASS = {14C25},
  MRNUMBER = {569684},
MRREVIEWER = {Joel\ Roberts},
}

@article{OHMOTO_2006, title={Equivariant Chern classes of singular algebraic varieties with group actions}, volume={140}, DOI={10.1017/S0305004105008820}, number={1}, journal={Mathematical Proceedings of the Cambridge Philosophical Society}, author={Ohmoto, Toru}, year={2006}, pages={115–134}}

@InProceedings{Kleiman-hilb,
author="Kleiman, Steven L.",
editor="Xamb{\'o}-Descamps, Sebastian",
title="Multiple-point formulas II: The Hilbert scheme",
booktitle="Enumerative Geometry",
year="1990",
publisher="Springer Berlin Heidelberg",
address="Berlin, Heidelberg",
pages="101--138",
isbn="978-3-540-47154-7"
}

@Article{Kazarian_2003,
doi = {10.1070/RM2003v058n04ABEH000642},
url = {https://doi.org/10.1070/RM2003v058n04ABEH000642},
year = {2003},
month = {aug},
publisher = {},
volume = {58},
number = {4},
pages = {665},
author = {M E Kazarian},
title = {Multisingularities, cobordisms, and enumerative geometry},
journal = {Russian Mathematical Surveys}
}

@incollection{Kazarian_2006,
  title={Thom polynomials},
  author={Kazarian, Maxim},
  booktitle={Singularity theory and its applications},
  volume={43},
  pages={85--136},
  year={2006},
  publisher={Mathematical Society of Japan}
}

@inbook{Kleiman,
author = {Kleiman, Steven},
year = {1982},
month = {01},
pages = {237-252},
title = {Multiple Point Formulas for Maps},
isbn = {978-0-8176-3106-2},
doi = {10.1007/978-1-4684-6726-0_11}
}

@article{laksov,
author = {Dan Laksov},
title = {{Residual intersections and Todds formula for the double locus of a morphism}},
volume = {140},
journal = {Acta Mathematica},
number = {none},
publisher = {Institut Mittag-Leffler},
pages = {75 -- 92},
year = {1978},
doi = {10.1007/BF02392304},
URL = {https://doi.org/10.1007/BF02392304}
}

@book{herbert,
  title={Multiple Points of Immersed Manifolds},
  author={Herbert, R.J.},
  isbn={9780821822500},
  lccn={lc81012772},
  series={American Mathematical Society: Memoirs of the American Mathematical Society},
  url={https://books.google.com/books?id=FyDUCQAAQBAJ},
  year={1981},
  publisher={American Mathematical Society}
}

@article{LS,
 ISSN = {00959057, 19435274},
 URL = {http://www.jstor.org/stable/24900608},
 author = {R. K. LASHOF and S. SMALE},
 journal = {Journal of Mathematics and Mechanics},
 number = {1},
 pages = {143--157},
 publisher = {Indiana University Mathematics Department},
 title = {Self-intersections of Immersed Manifolds},
 urldate = {2026-01-11},
 volume = {8},
 year = {1959}
}

@phdthesis{damon,
    author = {James Damon},
    title = {Thom polynomials for Contact Class Singularities},
    school = {Harvard University},
    year = {1971}
}

@incollection{ohmoto2016,
  title={Singularities of maps and characteristic classes},
  author={Ohmoto, Toru},
  booktitle={School on real and complex singularities in S{\~a}o Carlos, 2012},
  volume={68},
  pages={191--266},
  year={2016},
  publisher={Mathematical Society of Japan}
}

@article{thom,
     author = {Thom, Ren\'e},
     title = {Les singularit\'es des applications diff\'erentiables},
     journal = {Annales de l'Institut Fourier},
     pages = {43--87},
     year = {1956},
     publisher = {Imprimerie Louis-Jean},
     address = {Gap},
     volume = {6},
     doi = {10.5802/aif.60},
     mrnumber = {19,310a},
     zbl = {0075.32104},
     language = {fr},
     url = {https://www.numdam.org/articles/10.5802/aif.60/}
}

@article{ronga72,
 author = {Ronga, F.},
 title = {Le calcul des classes duales aux singularites de {Boardman} d'ordre deux},
 fjournal = {Commentarii Mathematici Helvetici},
 journal = {Comment. Math. Helv.},
 issn = {0010-2571},
 volume = {47},
 pages = {15--35},
 year = {1972},
 language = {French},
 doi = {10.1007/BF02566786},
 keywords = {58C25,57R20},
 url = {https://eudml.org/doc/139497},
 zbMATH = {3374698},
 Zbl = {0236.58003}
}

@misc{tpprimer,
      title={Thom polynomials. A primer}, 
      author={Richard Rimanyi},
      year={2024},
      eprint={2407.13883},
      archivePrefix={arXiv},
      primaryClass={math.AG},
      url={https://arxiv.org/abs/2407.13883}, 
}

@book {mn20,
    AUTHOR = {Mond, David and Nu\~no-Ballesteros, Juan J.},
     TITLE = {Singularities of mappings---the local behaviour of smooth and
              complex analytic mappings},
    SERIES = {Grundlehren der mathematischen Wissenschaften},
    VOLUME = {357},
 PUBLISHER = {Springer, Cham},
      YEAR = {2020},
      ISBN = {978-3-030-34439-9; 978-3-030-34440-5},
       DOI = {10.1007/978-3-030-34440-5},
       URL = {https://doi.org/10.1007/978-3-030-34440-5},
}

@article{macpherson,
 ISSN = {0003486X, 19398980},
 URL = {http://www.jstor.org/stable/1971080},
 author = {R. D. MacPherson},
 journal = {Annals of Mathematics},
 number = {2},
 pages = {423--432},
 publisher = {[Annals of Mathematics, Trustees of Princeton University on Behalf of the Annals of Mathematics, Mathematics Department, Princeton University]},
 title = {Chern Classes for Singular Algebraic Varieties},
 urldate = {2025-10-23},
 volume = {100},
 year = {1974}
}

@article{MO,
author = {Maulik, Davesh and Okounkov, Andrei},
year = {2012},
month = {11},
pages = {},
title = {Quantum Groups and Quantum Cohomology},
volume = {408},
journal = {Astérisque},
doi = {10.24033/ast.1074}
}

@article{schwartz,
 author = {Schwartz, M. H.},
 title = {Classes caract{\'e}ristiques definies par une stratification d'une vari{\'e}t{\'e} analytique complexe},
 fjournal = {Comptes Rendus Hebdomadaires des S{\'e}ances de l'Acad{\'e}mie des Sciences, Paris},
 journal = {C. R. Acad. Sci., Paris},
 issn = {0001-4036},
 volume = {260},
 pages = {3262--3264, 3535--3537},
 year = {1965},
 language = {French},
 zbMATH = {3225926},
 Zbl = {0139.16901}
}

@misc{ohm24,
      title={Universal polynomials for multi-singularity loci of maps}, 
      author={Toru Ohmoto},
      year={2024},
      eprint={2406.12166},
      archivePrefix={arXiv},
      primaryClass={math.AG},
      url={https://arxiv.org/abs/2406.12166}, 
}

@book{AF, title={Equivariant cohomology in algebraic geometry}, publisher={Cambridge University Press}, author={Anderson, David and Fulton, William}, year={2024}}

@article{mather,
     author = {Mather, John N.},
     title = {Stability of $C^\infty $ mappings, {IV.} {Classification} of stable germs by $R$-algebras},
     journal = {Publications Math\'ematiques de l'IH\'ES},
     pages = {223--248},
     publisher = {Institut des Hautes Etudes Scientifiques},
     volume = {37},
     year = {1969},
     mrnumber = {43 #1215b},
     zbl = {0202.55102},
     language = {en},
     url = {https://www.numdam.org/item/PMIHES_1969__37__223_0/}
}

@article{aluffi,
author = {Aluffi, Paolo},
year = {2012},
month = {07},
pages = {},
title = {Euler characteristics of general linear sections and polynomial Chern classes},
volume = {62},
journal = {Rendiconti del Circolo Matematico di Palermo},
doi = {10.1007/s12215-013-0106-x}
}

@article{ohm-integral,
author = {Ohmoto, Toru},
year = {2003},
month = {01},
pages = {},
title = {An elementary remark on the integral with respect to Euler characteristics of projective hyperplane sections},
volume = {36},
journal = {Reports of the Faculty of Science. Kagoshima University}
}

@misc{rimanyi,
      title={Higher characteristic classes of multisingularity loci}, 
      author={Jakub Koncki and Richárd Rimányi},
      year={2025},
      eprint={2510.14602},
      archivePrefix={arXiv},
      primaryClass={math.AG},
      url={https://arxiv.org/abs/2510.14602}, 
}

@article{FR18,
author={L{\'a}szl{\'o} M Feh{\'e}r and Rim{\'a}nyi, Rich{\'a}rd},
title={Chern-Schwartz-MacPherson classes of degeneracy loci},
journal={Geometry and Topology},
year={2018},
day={23},
volume={22},
number={6},
pages={3575-3622},
doi={10.2140/gt.2018.22.3575}
}

@article{RV18,
author={Rim{\'a}nyi, Rich{\'a}rd and Alexander Varchenko},
title={Equivariant Chern–Schwartz–MacPherson classes in partial flag varieties: interpolation and formulae},
journal={Schubert Varieties, Equivariant Cohomology and Characteristic Classes, IMPANGA2015},
year={2018},
month={Jan},
day={21},
pages={225-235},
}

@Article{Rim2001,
author={Rim{\'a}nyi, Rich{\'a}rd},
title={Thom polynomials, symmetries and incidences of singularities},
journal={Inventiones mathematicae},
year={2001},
month={Mar},
day={01},
volume={143},
number={3},
pages={499-521},
issn={1432-1297},
doi={10.1007/s002220000113},
url={https://doi.org/10.1007/s002220000113}
}

@article{rim-multi-pt,
author = {Rimányi, Richárd},
year = {2002},
month = {02},
pages = {},
title = {Multiple-point formulas - A new point of view},
volume = {202},
journal = {PACIFIC JOURNAL OF MATHEMATICS},
doi = {10.2140/pjm.2002.202.475}
}

@misc{rimanyi2025interpolationcharacterizationhigherthom,
      title={Interpolation characterization of higher Thom polynomials}, 
      author={Richard Rimanyi},
      year={2025},
      eprint={2503.09809},
      archivePrefix={arXiv},
      primaryClass={math.AG},
      url={https://arxiv.org/abs/2503.09809}, 
}
\noindent
Reese Lance\\
University of North Carolina, Chapel Hill, North Carolina\\
Email address: \texttt{rlance@unc.edu} \\
\end{document}